\definecolor{verydarkblue}{rgb}{0,0,0.5}
\theoremstyle{plain}
\crefname{introtheorem}{Theorem}{Theorems}
\newtheorem{theorem}{Theorem}[section]
\newtheorem{proposition}[theorem]{Proposition}
\newtheorem{lemma}[theorem]{Lemma}
\newtheorem{definition-theorem}[theorem]{Definition-Theorem}
\theoremstyle{definition}
\theoremstyle{remark}
\newtheorem{remark}[theorem]{Remark}
\newtheorem{example}[theorem]{Example}
\numberwithin{figure}{section}
\numberwithin{equation}{section}
\def\N{{\mathbb N}}
\def\Z{{\mathbb Z}}
\def\Q{{\mathbb Q}}
\def\R{{\mathbb R}}
\def\S{{\mathbb S}}
\def\S{{\mathbb S}}
\def\A{{\mathbb A}}
\def\P{{\mathbb P}}
\def\cH{\mathcal{H}}
\def\I{\mathcal{I}}
\def\J{\mathcal{J}}
\def\O{\mathcal{O}}
\def\fa{\mathfrak{a}}
\def\fm{\mathfrak{m}}
\def\fn{\mathfrak{n}}
\def\a{\alpha}
\def\g{\gamma}
\def\f{\phi}
\def\ff{\psi}
\def\p{\pi}
\def\t{\tau}
\def\x{\xi}
\def\D{\Delta}
\def\G{\Gamma}
\def\S{\Sigma}
\def\Om{\Omega}
\def\.{\cdot}
\def\^{\widehat}
\def\~{\widetilde}
\def\o{\circ}
\def\rat{\dashrightarrow}
\def\surj{\twoheadrightarrow}
\def\lru{\lceil}
\def\rru{\rceil}
\def\lrd{\lfloor}
\def\rrd{\rfloor}
\def\({\left(}
\def\){\right)}
\newcommand*{\da@rightarrow}{\mathchar"0\hexnumber@\symAMSa 4B }
\newcommand*{\da@leftarrow}{\mathchar"0\hexnumber@\symAMSa 4C }
\newcommand*{\xdashrightarrow}[2][]{%
  \mathrel{%
    \mathpalette{\da@xarrow{#1}{#2}{}\da@rightarrow{\,}{}}{}%
  }%
}
\newcommand{\xdashleftarrow}[2][]{%
  \mathrel{%
    \mathpalette{\da@xarrow{#1}{#2}\da@leftarrow{}{}{\,}}{}%
  }%
}
\newcommand*{\da@xarrow}[7]{%
  \sbox0{$\ifx#7\scriptstyle\scriptscriptstyle\else\scriptstyle\fi#5#1#6\m@th$}%
  \sbox2{$\ifx#7\scriptstyle\scriptscriptstyle\else\scriptstyle\fi#5#2#6\m@th$}%
  \sbox4{$#7\dabar@\m@th$}%
  \dimen@=\wd0 %
  \ifdim\wd2 >\dimen@
    \dimen@=\wd2 %
  \fi
  \count@=2 %
  \def\da@bars{\dabar@\dabar@}%
  \@whiledim\count@\wd4<\dimen@\do{%
    \advance\count@\@ne
    \expandafter\def\expandafter\da@bars\expandafter{%
      \da@bars
      \dabar@ 
    }%
  }%
  \mathrel{#3}%
  \mathrel{%
    \mathop{\da@bars}\limits
    \ifx\\#1\\%
    \else
      _{\copy0}%
    \fi
    \ifx\\#2\\%
    \else
      ^{\copy2}%
    \fi
  }%
  \mathrel{#4}%
}
\newcommand{\rd}[1]{\lrd{#1}\rrd}
\newcommand{\ru}[1]{\lru{#1}\rru}
\renewcommand{\and}{ \ \ \text{ and } \ \ }
\def\reg{\mathrm{reg}}
\def\Jac{\mathrm{Jac}}
\DeclareMathOperator{\codim} {codim}
\DeclareMathOperator{\Spec} {Spec}
\DeclareMathOperator{\ord} {ord}
\DeclareMathOperator{\vol} {vol}
\DeclareMathOperator{\val} {val}
\DeclareMathOperator{\lct} {lct}
\DeclareMathOperator{\Fitt} {Fitt}
\DeclareMathOperator{\Bs} {Bs}
\DeclareMathOperator{\mld} {mld}
\DeclareMathOperator{\ct} {ct}
\begin{document}

\title[Fano hypersurfaces with ordinary double points]
{Birational rigidity and K-stability of Fano hypersurfaces with ordinary double points}

\author{Tommaso de Fernex}
\address{Department of Mathematics, University of Utah, Salt Lake City, UT 48112, USA}
\email{{\tt defernex@math.utah.edu}}

\subjclass[2020]{Primary: 14J45, 14E08; Secondary: 32Q20}
\keywords{Birational rigidity, K-stability}

\thanks{%
The research of the first author was partially supported by NSF Grant DMS-2001254.
}

\begin{abstract}
Extending previous results, we prove that for $n \ge 5$ all
hypersurfaces of degree $n+1$ in $\P^{n+1}$ with isolated
ordinary double points are birational superrigid and K-stable, hence admit a weak K\"ahler--Einstein metric.
\end{abstract}

\maketitle

\section{Introduction}

This work continues prior research on birational superrigidity of Fano hypersurfaces in projective spaces, 
a property of certain Mori fiber spaces that was first identified for smooth quartic threefolds
as a byproduct of Iskovskikh and Manin's proof of nonrationality. 

The starting point of the paper is the following theorem.

\begin{theorem}
\label{t:smooth}
For $n \ge 3$, every smooth hypersurface of degree $n+1$ in $\P^{n+1}$ is birationally superrigid, 
hence nonrational. 
\end{theorem}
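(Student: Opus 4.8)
The plan is to argue by contradiction, combining the Noether--Fano method with Pukhlikov's technique of hypertangent divisors. Write $H=-K_X=\mathcal{O}_{\P^{n+1}}(1)|_X$, so that $X$ is a Fano variety of index one and, since $n\ge 3$, of Picard number one by the Lefschetz hyperplane theorem, with $H^{n}=\deg X=n+1$. Suppose $X$ is not birationally superrigid. By the Noether--Fano criterion (which for a Fano variety of Picard number one reduces to a single statement about movable systems), there exist a positive integer $m$, a movable linear system $\mathcal{M}\subseteq|mH|$ without fixed component, and a divisorial valuation $E$ over $X$ whose center $Z=c_X(E)\subsetneq X$ makes the pair $(X,\tfrac1m\mathcal{M})$ non-canonical at $E$, i.e.\ $\ord_E\mathcal{M}>m\,a_E(X)$; equivalently $\ct(X,\mathcal{M})<\tfrac1m$, and $Z$ is a center of non-canonical singularities, which after passing to a minimal such center we may take to be either an isolated point or of positive dimension. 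Granting that no such $Z$ can exist, every birational map from $X$ to a Mori fiber space is an isomorphism; since $\P^{n}$, regarded as a Mori fiber space over a point, is not isomorphic to $X$ (their Fano indices are $n+1$ and $1$), $X$ is not birational to $\P^{n}$, hence nonrational. So the whole problem reduces to excluding the maximal center $Z$.

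For a general pair $D_1,D_2\in\mathcal{M}$ the effective codimension-two cycle $Y=D_1\cdot D_2$ has $\deg Y=m^{2}H^{n}=(n+1)m^{2}$. Positive-dimensional centers are excluded by a combination of hyperplane-section reductions and direct degree estimates: if $\dim Z\ge 1$ then the companion lower bound $\mult_Z Y>m^{2}$ gives $(n+1)m^{2}=\deg Y\ge(\mult_Z Y)\deg Z>m^{2}\deg Z$, so $\deg Z\le n$, whence $Z$ spans a proper linear subspace of $\P^{n+1}$; cutting $X$ by general hyperplanes through a general point of $Z$ — each section being again a smooth hypersurface of degree $n+1$ on which the maximal singularity is inherited — lowers $\dim Z$ until one is reduced to a point, the residual low-dimensional situations being settled by a direct intersection-theoretic estimate on $Z$. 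So assume $Z=\{p\}$ is an isolated center of non-canonical singularities. Since $\dim X=n\ge 3$, the $4n^{2}$-inequality of Pukhlikov and Corti, in its sharp form, yields
\[
\mult_p Y \;>\; 4m^{2}.
\]

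The contradiction comes from an upper bound extracted from the local geometry of $X$ at $p$. Choose affine coordinates centered at $p$ and write the equation of $X$ as $f_1+f_2+\dots+f_{n+1}=0$ with $f_i$ homogeneous of degree $i$ and $f_1\ne 0$ by smoothness. For $1\le k\le n$ this produces the hypertangent linear subsystems $\Lambda_k\subseteq|kH|$, cut on $X$ by the forms $\sum_{i=1}^{k}g_i\,f_{k+1-i}$ with $\deg g_i=i$, whose general member has multiplicity at least $k+1$ at $p$ and restricts with the expected multiplicity to a general subvariety through $p$. Intersecting $Y$ successively with sufficiently general members of $\Lambda_2,\Lambda_3,\dots$ and discarding at each step the components absorbed into the divisors already chosen, one obtains a multiplicity--degree estimate of the shape
\[
\mult_p Y \;\le\; \frac{4}{n+1}\,\deg Y \;=\; 4m^{2},
\]
incompatible with $\mult_p Y>4m^{2}$. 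This settles $n\ge 4$; for $n=3$ one may instead invoke the original proof of Iskovskikh and Manin for the quartic threefold.

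The main obstacle is precisely this last estimate: carrying out the hypertangent intersection process for an \emph{arbitrary} smooth $X$, without the ``regularity'' hypotheses of earlier treatments, and coping with the fact that the two bounds above meet on the nose, so that every borderline case must actually be won. The successive intersections can acquire components of large multiplicity contained in the base loci of the $\Lambda_k$, or in the divisors already chosen, and along such a component the naive general-position bound on $\mult_p$ fails; one must then control $\mult_p$ of each discarded component and check that the degree spent on it always dominates its contribution. I would handle this by an induction on $k$ that tracks, for every intermediate effective cycle, both its degree and its multiplicity at $p$, using repeatedly that a proper subvariety of $X$ through $p$ has degree at least its multiplicity at $p$, together with $n\ge 3$, to keep the ratio $\mult_p/\deg$ below $4/(n+1)$ at each stage; the genuinely delicate subcase, in which the $f_i$ share a common factor and the hypertangent systems degenerate, is dispatched by a separate, more explicit analysis of the corresponding linear sections of $X$. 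The same bookkeeping also underlies the treatment of positive-dimensional centers sketched above.
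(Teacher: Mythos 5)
Your overall skeleton (Noether--Fano reduction to excluding a maximal center, exclusion of positive-dimensional centers by degree estimates, the $4m^2$-inequality at an isolated point) is a legitimate and classical route, but the decisive step --- the upper bound $\mult_p Y \le \frac{4}{n+1}\deg Y$ obtained by intersecting with hypertangent divisors --- is precisely where this route is known to break down for \emph{arbitrary} smooth hypersurfaces, and your proposal does not close that gap. The hypertangent construction only yields the stated multiplicity gain for the general member of $\Lambda_k$ under position/regularity hypotheses on the Taylor coefficients $f_1,\dots,f_{n+1}$ at $p$ (essentially, that suitable truncations form a regular sequence, so that each successive intersection is proper and no component of excessive multiplicity survives). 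For special smooth hypersurfaces these hypotheses genuinely fail, the systems $\Lambda_k$ can have base loci through $p$ of unexpectedly large dimension, and --- as you yourself note --- the two bounds meet exactly, so not a single borderline configuration can be conceded. This is not a technical wrinkle to be ``dispatched by a separate, more explicit analysis'': it is the reason the theorem was open for decades after Pukhlikov's work, which proves the statement only for general (regular) hypersurfaces. Your proposed bookkeeping induction (``degree at least multiplicity'' for subvarieties through $p$) only gives the ratio $\mult_p/\deg\le 1$, far from the needed $4/(n+1)$, and no argument is supplied for the degenerate subcases. As written, the proof establishes the result only under the regularity conditions, i.e., for general rather than all smooth hypersurfaces.

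For comparison, the paper's proof avoids hypertangent divisors entirely. After the same Noether--Fano reduction and the reduction to a point center $p$ (done there via Pukhlikov's cycle-theoretic multiplicity bound rather than by hyperplane cuts), it passes to $Z=D\cap D'$ for general $D,D'\in\cH$, cuts with a general hyperplane section $Y\ni p$, and applies inversion of adjunction to get $\mld_p(Y,cZ|_Y)\le 0$. The contradiction then comes from multiplier ideals: Nadel vanishing bounds the colength of $\J(Y,2cZ|_Y)$ above by $h^0(\O_Y(2))=\binom{n+2}{2}$, while Zhuang's lower bound $l(\O_\Sigma)\ge \tfrac12 3^{n-1}$ for the subscheme cut out by the multiplier ideal of a non-log-terminal pair forces $n\le 4$; the cases $n=3,4$ are finished by a Bezout estimate and by the generic-projection method of de Fernex--Ein--Musta\c t\u a, respectively. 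These tools require no genericity of $X$, which is exactly what your approach is missing.
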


This theorem has a long history that covers over a century, starting 
with the work of Fano on quartic threefolds \cite{Fan07,Fan15} which was 
later revisited and completed by Iskovskikh and Manin \cite{IM71}. 
The higher dimensional case was promoted in the work of Pukhlikov \cite{Puk87,Puk98},
where significant advances were made, and a complete proof 
was eventually given in \cite{dF16}; the proof was later simplified in \cite{Zhu20}.
Other people have contributed with key ideas along the way;
we recall here Segre and Manin's works of cubic surfaces over nonclosed fields \cite{Seg51,Man66},  
Corti's new proof of the three-dimensional case where inversion of adjunction is used for the first time
to connect the method of maximal singularities to log canonical thresholds \cite{Cor00}, 
and the author's work with Ein and Musta\c t\u a on log canonical thresholds \cite{dFEM03,dFEM04}. 
A survey of the history of \cref{t:smooth} can be found in \cite{Kol19}. 
We will briefly review the proof of \cref{t:smooth} following \cite{Zhu20,Kol19}
before turning our attention to the case of singular hypersurfaces. 

Birational superrigidity is sensitive to singularities. 
Singular quartic threefolds in $\P^4$ are never birationally superrigid, 
though under some reasonable assumptions those with ordinary double points are 
birational rigid, which is a weaker condition compared to superrigidity \cite{Puk88,CM04,Mel04}.
In higher dimensions, Pukhlikov extended his proof of birational superrigidity of general smooth Fano hypersurfaces
to general hypersurfaces with isolated ordinary multiple points \cite{Puk02b}, and
with Eckl, they prove a similar result for general hypersurfaces 
with quadratic (non-necessarily isolated) singularities of rank $\ge 5$ \cite{EP14}. 
Cheltsov looked at sextic fivefolds in $\P^6$ with isolated 
ordinary double points, proving that they are always birationally superrigid provided
they do not contain any plane \cite{Che07}. 

More recently, birational superrigidity of singular hypersurfaces was established in \cite{dF17}
under rather broad assumptions on the singularities, where a certain measure of the singularity
is bounded in terms of the dimension. 
An asymptotically stronger bound was later given under more restrictive conditions on the 
types of singularities in \cite{LZ21}. 
While these results are asymptotically strong as the dimension grows, 
they leave a gap open in low dimensions. 

Even restricting to the simplest case of isolated ordinary 
double points (namely, points whose tangent cones are cones
over smooth quadrics), the main theorem of \cite{dF17} only covers the cases of dimension $n \ge 19$, 
leaving Cheltsov's theorem on sextic fivefolds not containing planes 
and Pukhlikov's result for general hypersurfaces
with isolated ordinary double points the only known results in the remaining cases $4 \le n \le 18$. 

The purpose of this paper is to try to fill this gap. 

\begin{theorem}
\label{t:ODP}
For $n \ge 5$, every hypersurface of degree $n+1$ in $\P^{n+1}$
with only isolated ordinary double points as singularities is birationally superrigid, hence nonrational. 
\end{theorem}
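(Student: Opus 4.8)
The plan is to apply the Noether--Fano--Iskovskikh method. Since $n \ge 5$, the hypersurface $X = X_{n+1} \subset \P^{n+1}$ is factorial (the affine cone over a smooth quadric of dimension $n-1 \ge 3$ is factorial, so each ordinary double point is), and by the Lefschetz theorem $\Pic(X) = \Cl(X) = \Z\cdot\mathcal{O}_X(1)$ with $-K_X = \mathcal{O}_X(1)$; hence birational superrigidity is equivalent to the assertion that $(X,\tfrac1k\cH)$ has canonical singularities for every $k \ge 1$ and every mobile linear system $\cH \subseteq |\mathcal{O}_X(k)|$. So I argue by contradiction: assume some such pair is not canonical and fix an irreducible subvariety $Z \subseteq X$ that is a maximal center, i.e.\ minimal among the centers of divisorial valuations $E$ with log discrepancy $a_E(X,\tfrac1k\cH) \le 0$. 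Mobility of $\cH$ forces $\codim_X Z \ge 2$. The argument then splits according to the position of $Z$ relative to $\Sing X$.

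Suppose first that $Z$ meets the smooth locus, which is automatic unless $Z$ is one of the finitely many singular points; replace $Z$ by a general point of it and cut successively by general hyperplanes through that point. Each such hyperplane avoids the finite singular set, so one stays within the class of \emph{smooth} hypersurfaces of degree $n+1$, and one is reduced to the configuration of a mobile system with a non-canonical point on a smooth hypersurface --- precisely what is excluded in the proof of \cref{t:smooth}. The only inputs used there are $\deg X = n+1$, Corti's multiplicity inequality $\mult_Z(H_1 \cdot H_2) > 4k^2$ for a codimension-two center obtained from a general surface section, and the numerical geometry of Fano and Calabi--Yau hypersurfaces (degree bounds, spans of low-degree subvarieties); none of these is affected by the presence of ordinary double points elsewhere on $X$, so this case is closed exactly as in \cite{dF16,Zhu20}.

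The new case is when $Z = \{p\}$ is an ordinary double point. Let $\pi\colon \widetilde X = \Bl_p X \to X$ with exceptional divisor $Q = \pi^{-1}(p)$; since $p$ is an ordinary double point, $Q$ is a smooth quadric of dimension $n-1$, $\widetilde X$ is smooth near $Q$, $K_{\widetilde X} = \pi^*K_X + (n-2)Q$, and $\mathcal{O}_{\widetilde X}(Q)|_Q = \mathcal{O}_Q(-1)$. Writing $m = \mult_p \cH$ and $\pi^*\cH = \widetilde\cH + mQ$, the proper transform restricts to a linear system $\widetilde\cH|_Q \subseteq |\mathcal{O}_Q(m)|$. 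Intersecting two general members of $\cH$ gives an $(n-2)$-cycle of degree $(n+1)k^2$ whose multiplicity at $p$ is at least $2m^2$ (the ambient $X$ has multiplicity $2$ at $p$), whence $m \le k\sqrt{(n+1)/2}$; in particular $m < (n-2)k$, so $a_Q(X,\tfrac1k\cH) = n-1-m/k > 1$, and the valuation $E$ witnessing non-canonicity has center $W \subsetneq Q$. Comparing log discrepancies on $\widetilde X$ and applying inversion of adjunction along the smooth divisor $Q$, the restricted pair $(Q, \tfrac1k\widetilde\cH|_Q)$ is not log canonical along $W$. The task is then to show that a linear system of degree $m \le k\sqrt{(n+1)/2}$ on a smooth quadric of dimension $n-1$ cannot have such a singularity once $n \ge 5$.

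This last step is the main obstacle, and is where $n \ge 5$ enters (the analogous bound in earlier work forced $n \ge 19$). A purely numerical treatment via surface sections of $Q$ only yields $m > \sqrt{2}\,k$, which is compatible with $m \le k\sqrt{(n+1)/2}$, so one must exploit the internal geometry of the quadric: that $Q^{n-1}$ contains linear subspaces of dimension up to $\lfloor(n-1)/2\rfloor$ and that its lines through a point sweep out a positive-dimensional family (both needing the room afforded by $n \ge 5$), the relation between $\Bs(\widetilde\cH|_Q)$ and the lines of $X$ through $p$, and sharpened forms of the multiplicity inequalities --- applied, if necessary, inductively by cutting $Q$ by its own general hyperplane sections. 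Carrying this through contradicts the existence of the non-log-canonical center $W$ and completes the proof; ``hence nonrational'' is then immediate, as $\P^n$ is a Mori fiber space not birational to $X$.
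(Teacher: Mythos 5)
There are two genuine gaps. First, your case analysis is incomplete. The dichotomy ``$Z$ meets the smooth locus (reduce to \cref{t:smooth}) versus $Z$ is a node'' silently assumes that every positive-dimensional center can be excluded by the smooth-hypersurface argument. But the step of that argument which forces the center to be a point is Pukhlikov's multiplicity bound (\cref{p:Puk}), which requires $X$ smooth and genuinely fails on singular hypersurfaces (see \cref{r:order-of-vanishing} and \cref{r:Puk}): on a nodal $X$ one only gets that the locus where $e_x(D\cdot D')>r^2$ is at most one-dimensional, so a \emph{curve through a node} is a legitimate candidate center, and it is not ``one of the finitely many singular points.'' At a general (smooth) point of such a curve the pair is merely strictly canonical, not non-log-canonical, so one must cut by one additional hyperplane before inversion of adjunction produces a non-lc point; this shifts all the numerology (the Nadel upper bound becomes $\binom{n+2}{3}$ rather than $\binom{n+2}{2}$ and the colength lower bound drops to $\tfrac12 3^{n-3}$, which is why the uniform argument only closes for $n\ge 8$ and the cases $5\le n\le 7$ require the projection method of \cref{t:projection}). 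Your reduction ``cut by hyperplanes avoiding the singular set and quote the smooth case'' therefore does not dispose of this case, and the contribution of ordinary double points elsewhere on $X$ does affect the inputs you list.

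Second, the central new case $Z=\{p\}$ a node is set up but not proved. The passage to a non-log-canonical pair $(Q,\tfrac1k\widetilde\cH|_Q)$ on the exceptional quadric with $m\le k\sqrt{(n+1)/2}$ is reasonable, but you yourself note that the available numerical estimates are consistent with the existence of such a pair, and the ensuing appeal to ``the internal geometry of the quadric,'' lines through $p$, and ``sharpened forms of the multiplicity inequalities'' is not an argument --- it is exactly the open step. (This is essentially Cheltsov's route for nodal sextic fivefolds, where an extra hypothesis --- no plane through the node --- is needed; that alone signals the step is delicate.) The paper avoids the blow-up entirely: for $n\ge 8$ it degenerates the nodal linear section to a union of two hyperplanes via a $\mathbb{G}_m$-action, applies inversion of adjunction twice, and invokes the colength bound \cref{l:sharp-bound-length} in dimension $n-3$ against the Nadel bound; for $5\le n\le 7$ it uses the generic-projection theorem for Mather log discrepancies (\cref{t:projection}), exploiting that the Jacobian ideal of an ordinary double point equals the maximal ideal. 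As written, your proposal does not establish the theorem.
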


To prove this result, we provide a new asymptotic argument which 
allows us to cover all cases $n \ge 8$, thus reducing the gap to a few lower dimensional 
cases. The remaining cases are treated using an extension to singular hypersurfaces
of the projection method of \cite{dFEM03}, which is made possible using a result from \cite{dFM15}
on Mather log discrepancies.
The proof does not extend to the case $n=4$, 
and the question whether all quintic hypersurfaces in $\P^5$
with isolated ordinary double points are birationally superrigid remains open.

Birational superrigidity is a property closely related to K-stability. 
A first indication of this was discovered in \cite{OO13}, and a more precise connection was
established in \cite{SZ19,Zhu20}. 
It was observed in particular in \cite{SZ19} that \cref{t:smooth}, 
combined with Cheltsov's bound on the $\a$-invariant \cite{Che01}, implies that all 
smooth hypersurfaces of degree $n+1$ in $\P^{n+1}$ are K-stable, hence admit
a K\"ahler--Einstein metric. 
Note that the same conclusion follows from by combining Cheltsov's bound to
\cite{Fuj19b}, which extends Tian's criterion for K-stability of
Fano manifolds \cite{Tia87} to the boundary case.
 
Under suitable generality conditions on the defining equation 
which are met by general hypersurfaces of degree $n+1$ in $\P^{n+1}$ with 
quadratic singularities of rank $\ge 8$, 
Pukhlikov established stronger bounds on $\a$-invariant and canonical thresholds, see
\cite{Puk05,Puk15,Puk21}. These bounds immediately imply K-stability 
by Tian's criterion and its extension to singular Fano varieties due to \cite{OS12};
note, however, that these stronger bounds do not holds without some generality conditions.
See also \cite{LZ19} regarding the sharpness of Tian's criterion and its aforementioned extensions. 

Here we apply \cite{SZ19}, along with \cite{LXZ21}, to deduce the following result from \cref{t:ODP}.

\begin{theorem}
\label{t:ODP-K-stab}
For $n \ge 5$, every hypersurface of degree $n+1$ in $\P^{n+1}$ 
with only isolated ordinary double points as singularities is K-stable and
hence admits a weak K\"ahler--Einstein metric. 
\end{theorem}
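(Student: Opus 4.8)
The plan is to deduce \cref{t:ODP-K-stab} from \cref{t:ODP} by means of the bridge between birational superrigidity and K-stability, so that the only work lies in checking that $X$ fits the framework of that bridge. Let $X \subset \P^{n+1}$ be a hypersurface of degree $n+1$ with only ordinary double points, $n \ge 5$. First I would record that $X$ is a $\Q$-Fano variety of the expected shape. An ordinary double point of an $n$-dimensional hypersurface is a cone over a smooth quadric $Q^{n-1}$; for $n \ge 4$ this quadric has dimension $\ge 3$, so its Picard group is generated by the hyperplane class and the cone is analytically factorial, whence $X$ is $\Q$-factorial. The singularity is moreover terminal (the blow-up of the point has a single exceptional divisor, of discrepancy $n-2 > 0$) and Gorenstein, and $X$ is irreducible and normal since its singular locus is finite; by the Grothendieck--Lefschetz theorem $\Pic(X) = \Cl(X) = \Z\cdot\mathcal{O}_X(1)$ with $-K_X = \mathcal{O}_X(1)$. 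Thus $X$ is a Gorenstein terminal Fano variety of Picard rank one and Fano index one, which is exactly the setting of \cite{SZ19} and \cite{LXZ21}.

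By \cref{t:ODP}, $X$ is birationally superrigid. I would then apply the implication, due to \cite{SZ19} and completed by \cite{LXZ21}, that a birationally superrigid $\Q$-Fano variety of this kind is K-stable. The structure is as follows. By \cite{SZ19}, superrigidity forces $\b_X(v) = A_X(v) - S_X(v) \ge 0$ for every divisorial valuation $v$ over $X$: if $\b_X(v) < 0$ then $A_X(v) < S_X(v) < \tau_X(v)$, where $\tau_X(v)$ is the pseudoeffective threshold, and this strict gap produces a movable linear system $\mathcal M \sim_\Q -K_X$ with $\ord_v(\mathcal M) > A_X(v)$, i.e.\ a maximal singularity, contradicting birational superrigidity; hence $\delta(X) \ge 1$ and $X$ is K-semistable. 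To rule out $\delta(X) = 1$, I would invoke the finite generation theorem of \cite{LXZ21}: if $\delta(X) = 1$ then the infimum is computed by a quasi-monomial valuation $v$ over $X$ whose associated graded algebra is finitely generated, so that $A_X(v) = S_X(v)$ and, by the finite generation, the same maximal-singularity construction applies to this $v$, again contradicting \cref{t:ODP}. Therefore $\delta(X) > 1$, so $X$ is uniformly K-stable, in particular K-stable. Finally, a K-stable $\Q$-Fano variety is K-polystable, so by the Yau--Tian--Donaldson correspondence for (possibly singular) $\Q$-Fano varieties it admits a weak K\"ahler--Einstein metric, as claimed.

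The decisive difficulty is not in the above deduction, which is essentially formal once \cref{t:ODP} is available; the substance of the paper is \cref{t:ODP} itself. Within the present proof, the two points that need attention are: verifying the mild normality and singularity hypotheses of \cite{SZ19} and \cite{LXZ21} (the reason for the first paragraph, and in particular why the bound $n \ge 4$, which makes $Q^{n-1}$ of dimension $\ge 3$ and hence the singularity factorial, is what one wants); and correctly substituting the finite-generation input of \cite{LXZ21} for the Cheltsov-type bound on the $\a$-invariant that upgraded K-semistability to K-stability in the smooth case, a bound that need not survive the presence of ordinary double points.
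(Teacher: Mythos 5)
There is a genuine gap at the heart of your deduction. You assert that birational superrigidity alone forces $\b_X(v)=A_X(v)-S_X(v)\ge 0$ for every divisorial valuation, hence $\delta(X)\ge 1$; this is not what \cite{SZ19} proves, and it is not known. The inequality $A_X(v)<S_X(v)\le \tau_X(v)$ only produces an \emph{effective} $\Q$-divisor $D\sim_\Q -K_X$ with $\ord_v(D)>A_X(v)$. To contradict superrigidity via Noether--Fano one needs a \emph{movable} linear system $\cH\subset|-rK_X|$ with $(X,\tfrac 1r\cH)$ non-canonical, i.e.\ with $\tfrac 1r\ord_v(\cH)>A_X(v)-1$ (note the shift by $1$: the criterion is about the canonical threshold, not the log canonical one). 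Passing from $D$ to its movable part and absorbing both the fixed part and this shift is precisely where \cite{SZ19} needs the hypothesis that $\lct(X,D)>\tfrac 12$ for every effective $\Q$-divisor $D\sim_\Q -K_X$; this hypothesis appears explicitly in \cref{t:SZ}. Whether superrigidity by itself implies K-(semi)stability is an open question, and the finite generation theorem of \cite{LXZ21} cannot substitute for the missing hypothesis: in the paper that theorem is invoked only to pass from K-stability to the weak K\"ahler--Einstein metric, not to establish $\delta(X)\ge 1$ in the first place.

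Consequently the substantive content of the paper's proof of \cref{t:ODP-K-stab}, beyond \cref{t:ODP} itself, is exactly the verification of this hypothesis for nodal hypersurfaces, namely \cref{p:alpha}: $\lct(X,D)>\tfrac 12$ for every effective $\Q$-divisor $D\sim_\Q -K_X$ when $n\ge 4$. Its proof is not formal: it uses \cref{p:Puk} and \cref{t:mult-thresholds} to bound the dimension of the non-log-terminal locus, the Mather-discrepancy projection theorem (\cref{t:projection}) together with the identity $\Jac_X\.\O_{X,p}=\fm_p$ at an ordinary double point, an analysis of the generically two-to-one cover induced by a generic projection near the node, and Nadel vanishing. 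You dismiss this step as a bound that ``need not survive the presence of ordinary double points''; establishing that it \emph{does} survive is what the paper has to do. (Your preliminary paragraph on terminality, $\Q$-factoriality and $\Pic(X)=\Z\.\O_X(1)$ is essentially fine for $n\ge 4$; the paper cites \cite{dF17} for the Zariski-local factoriality.)
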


\subsubsection*{Acknowledgements} 
We would like to thank Aleksandr Pukhlikov for bringing to our attention several relevant references,
Kento Fujita and Yuga Tsubouchi for pointing out an error in the proof of an earlier version of 
\cref{l:sharp-bound-length}, 
and the referee for valuable comments and for suggesting 
an extension of \cref{t:ODP} to quadratic singularities of rank $\ge 2$ (see \cref{r:rank>1}).

\section{Preliminaries on singularities}

We work over an algebraically closed field of characteristic zero.

\subsection{Multiplicities}

We start this section by recalling some basic properties of multiplicities. We follow \cite{Ful98}.
In particular, for us the \emph{multiplicity} $e_p(X)$ of 
a pure dimensional scheme $X$ 
at a closed point $p$ is defined to be the Samuel multiplicity $e(\fm_p)$ 
of the maximal ideal $\fm_p$ of the local ring $\O_{X,p}$.

\begin{example}
\label{eg:classical-mult}
If $X \subset \A^n$ is a pure dimensional Cohen--Macaulay scheme and $p \in X$ is a closed point, 
then $e_p(X) = i(p, X\.L; \A^n)$, the intersection multiplicity 
of $X$ with a general linear subspace $L \subset \A^n$ of codimension 
equal to the dimension of $X$. This follows from \cite[Example~4.3.5(c) and Proposition~7.1]{Ful98}. 
It relates the definition of multiplicity adopted here with more classical definitions. 
The equality does not hold in general if $X$ is not Cohen--Macaulay. 
\end{example}

\begin{example}
If $D$ is an effective Cartier divisor on a variety $X$ 
and $p \in X$ is a regular point, then $e_p(D)$ is simply the
order of vanishing of $D$ at $p$, namely, the multiplicity of a
generator of the ideal of $D$ in the local ring $\O_{X,p}$ \cite[Example~4.3.9]{Ful98}.
This is no longer true, however, if $p$ is a singular point of $X$ 
(cf.\ \cref{r:order-of-vanishing} below).
\end{example}

More generally, 
for any closed subscheme $Z$ of a pure dimensional scheme $X$ and any irreducible component $T$ of $Z$, 
the {\it multiplicity of $X$ along $Z$ at $T$}, denoted by $e_Z(X)_T$, 
is defined to be the Samuel multiplicity of the ideal of $Z$ 
in the local ring $\O_{X,T}$; equivalently, $e_Z(X)_T$ is the coefficient of 
$[T]$ in the Segre class of the normal cone to $Z$ in $X$ \cite[Example~4.3.4]{Ful98}. 
If $Z = T$, then we just write $e_T(X)$. 

\begin{example}
\label{eg:e_pZ=e_ZX_p}
Whenever $T$ is an irreducible component of a scheme $Z$ we have $e_T(Z) = l(\O_{Z,T})$ by definition, 
and if $Z$ is regularly embedded in a pure dimensional Cohen--Macaulay scheme $X$ then 
$e_T(Z) = e_Z(X)_T$ \cite[Example~4.3.5(c)]{Ful98}. 
\end{example}

\begin{example}
\label{eg:ci-mult}
If $Z = D_1 \cap \dots \cap D_r \subset X$ is the complete intersection of $r$ divisors
on variety $X$ and $T$ is an irreducible component of $Z$, then 
$e_Z(X)_T = i(T,D_1\. \dots \. D_r;X)$,
where the last term denotes the intersection multiplicity of the divisors $D_i$
at the generic point of $T$ \cite[Example~7.1.10.(a)]{Ful98}.
\end{example}

Multiplicities are semicontinuous, in the following sense. 
If $X$ is a pure-dimensional scheme, then
for every irreducible closed set $T \subset X$
there is a nonempty open set $T^\o \subset T$ such that
$e_p(X) \ge e_T(X)$ for every point $p \in X$, and equality holds if $p \in T^\o$.
This is proved in \cite[Theorem~(4)]{Ben70}; we will tacitly use this fact without further mention.

\begin{remark}
\label{r:order-of-vanishing}
Differently from the multiplicity, the order of vanishing
of a Cartier divisor $D$ on a variety $X$ is not always a semicontinuous 
function of the point of the variety. 
For example, take $X \subset \A^3$ to be a cone over a smooth conic
and $D = 2L$ where $L$ is a line through the vertex of the cone. 
The order of vanishing $D$ at any point of $L$ is 2
except at the vertex, where the order of vanishing drops to 1. 
\end{remark}

We list a few properties of multiplicities that will be used later in the paper.
The first property, which can be interpreted as a Bertini-type property, is proved in
\cite[Proposition~4.5]{dFEM03} and \cite[Proposition~8.5]{dF13}.

\begin{proposition}
\label{p:Bertini}
Let $Z$ be a pure-dimensional closed Cohen-Macaulay subscheme of $\P^m$ of positive dimension
and $\cH \subset (\P^m)^\vee$ a hyperplane in the dual space. 
Then for a general $H \in \cH$ we have $e_p(Z \cap H) = e_p(Z)$ for every $p \in Z \cap H$. 
\end{proposition}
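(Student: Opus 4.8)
The plan is to reduce the statement to a local computation and then invoke the intersection-theoretic description of multiplicity. First I would fix a point $p \in Z$ and localize at $p$, so that we may work with the Cohen--Macaulay local ring $A := \O_{Z,p}$ of dimension $d = \dim Z \ge 1$. A hyperplane $H$ through $p$ corresponds, after passing to an affine chart, to a linear form $\ell$ whose image in $A$ lies in the maximal ideal $\fm_p$; the key point is that for $H$ ranging over the members of the pencil $\cH$ passing through $p$, the images $\bar\ell \in \fm_p/\fm_p^2$ sweep out a linear subspace of the Zariski cotangent space, and the "general" such $\bar\ell$ avoids the finitely many associated primes of the graded ring $\gr_{\fm_p}(A)$ that have the wrong dimension. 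The precise statement I want is that for general $H \in \cH$ the element $\bar\ell$ is a \emph{superficial element} of order $1$ for $\fm_p$, equivalently a nonzerodivisor on the associated graded ring in high degrees; this is the standard prime-avoidance argument for superficial elements, which works over an infinite field.

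Once $\bar\ell$ is superficial, I would use two classical facts about Samuel multiplicity. First, for a superficial element $x \in \fm_p$ that is moreover a nonzerodivisor on $A$ (which holds here because $A$ is Cohen--Macaulay of dimension $\ge 1$ and a general linear form is part of a system of parameters), one has $e(\fm_p, A) = e(\fm_{p}/(x), A/(x))$ — reduction of multiplicity by a superficial nonzerodivisor. Second, $Z \cap H$ near $p$ is exactly $\Spec(A/(\ell))$, and $\fm_p/(\ell)$ is the maximal ideal of that local ring, so $e(\fm_p/(\ell), A/(\ell)) = e_p(Z \cap H)$ by definition. Combining these gives $e_p(Z \cap H) = e_p(Z)$ at the fixed point $p$.

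To finish I must upgrade "for each fixed $p$, a general $H$ works" to "a general $H$ works simultaneously for all $p \in Z \cap H$". This is where I would be careful: one cannot simply intersect infinitely many open conditions. Instead I would argue in families. Consider the incidence variety $\{(p,H) : p \in Z \cap H\} \subset Z \times \cH$ with its two projections; the locus where $e_p(Z \cap H) > e_p(Z)$ is a constructible subset by semicontinuity of multiplicity in families (Bennett's theorem, quoted in the excerpt), applied to the universal hyperplane section $\cZ \to \cH$. A dimension count using Example~\ref{eg:classical-mult} (so that $e_p(Z)$ is realized as an intersection multiplicity with a general linear subspace of complementary dimension, hence behaves well) shows this bad locus does not dominate $\cH$: over a general $H$ the section $Z \cap H$ is again Cohen--Macaulay of dimension $d-1$, and the fiberwise multiplicity at every point agrees with $e_p(Z)$ by the local computation above applied at the generic point of each stratum. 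The main obstacle is precisely this globalization step — making sure the superficiality/general-position choice can be made uniformly over $Z \cap H$ rather than pointwise — and the cleanest route is to invoke the cited references \cite[Proposition~4.5]{dFEM03} and \cite[Proposition~8.5]{dF13}, which establish exactly this passage from the pointwise superficial-element computation to the Bertini-type statement over a general member of the pencil; everything else is the routine local algebra of Samuel multiplicities recalled above.
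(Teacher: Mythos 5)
You should first note that the paper itself does not prove \cref{p:Bertini}: it is stated with a pointer to \cite[Proposition~4.5]{dFEM03} and \cite[Proposition~8.5]{dF13}. Since your write-up ultimately defers to those same references for the decisive step, your conclusion is consistent with what the paper does. Read as a self-contained argument, however, your sketch has a genuine gap, and it sits exactly where you flag ``the main obstacle.''

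The globalization step does not work as described. The bad locus $B=\{(p,H): e_p(Z\cap H)>e_p(Z)\}$ in the incidence variety has fibres over a point $p\in Z$ (away from the base point of $\cH$) contained in $\cH_p\cong\P^{m-2}$, of dimension at most $m-3$; so the dimension count only yields $\dim B\le \dim Z+m-3$, while $\dim\cH=m-1$. This is inconclusive as soon as $\dim Z\ge 2$: a priori $B$ could dominate $\cH$ with finite or even positive-dimensional fibres, so ``the bad locus does not dominate $\cH$'' is asserted, not proved. There is also a secondary issue with the pointwise step: for $p$ different from the base point $x_0$ of $\cH$, the members of $\cH$ through $p$ are precisely the hyperplanes containing the line joining $p$ to $x_0$, so their equations sweep out only a codimension-one subspace of $\fm_p/\fm_p^2$; a general element of a proper subspace need not avoid the relevant associated primes of the associated graded ring, so superficiality is not automatic at such points. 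What does come for free from your reduction is the easy inequality $e_p(Z\cap H)\ge e_p(Z)$ (a parameter ideal contained in $\fm_p$ has Samuel multiplicity at least $e(\fm_p)$); it is the reverse inequality, uniformly over all of $Z\cap H$, that constitutes the actual content of \cite{dFEM03,dF13}. So either present the proposition purely as a citation, as the paper does, or replace the dimension count with the genuine globalization argument.
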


The proof of the next Bezout inequality combines various properties established in \cite{Ful98}
and is included for the convenience of the reader. 

\begin{proposition}
\label{p:Bezout1}
Let $Z = X \cap H_1 \cap \dots \cap H_r$ be the complete intersection of a variety $X \subset \P^n$ with 
$r$ hypersurfaces $H_i \subset \P^n$.
Then for every irreducible component $T$ of $Z$ we have
\[
e_Z(X)_T \le \deg X \.\prod \deg H_i.
\]
\end{proposition}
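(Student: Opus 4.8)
The plan is to reduce the statement to the classical Bézout inequality for intersection multiplicities cut out by hyperplanes, using the multiplicity-vs-intersection-multiplicity dictionary recorded in the examples above. First I would reduce to the case where each $H_i$ is in fact a hyperplane rather than a hypersurface of higher degree. This is done by the standard trick of re-embedding: pick for each $i$ a basis of the linear system $|\O_{\P^n}(\deg H_i)|$, or more economically, replace $\P^n$ by a Veronese-type image under the product of the linear systems $|\O(\deg H_i)|$, so that each $H_i$ becomes the restriction of a hyperplane. Under this embedding $X$ maps to a variety $X' \subset \P^N$ with $\deg X' = \deg X \cdot \prod \deg H_i$, the subscheme $Z$ is identified with $X' \cap L_1 \cap \dots \cap L_r$ for hyperplanes $L_i$, and the local ring $\O_{Z,T}$ — hence the Samuel multiplicity $e_Z(X)_T$ — is unchanged since the embedding is a closed immersion that is an isomorphism onto its image. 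So it suffices to prove the inequality $e_Z(X)_T \le \deg X$ when $Z = X \cap L_1 \cap \dots \cap L_r$ is a linear section of a variety $X \subset \P^n$.

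For that reduced statement I would argue by induction on $r$. When $r=0$ there is nothing to prove since $Z = X$ and $e_X(X) = 1 \le \deg X$ (here $T = X$). For the inductive step, write $Z = Z' \cap L_r$ where $Z' = X \cap L_1 \cap \dots \cap L_{r-1}$. I would apply the associativity/projection formula for Segre classes and the description in \cref{eg:ci-mult}: since $Z$ is a complete intersection inside $X$ of the Cartier divisors $L_1|_X, \dots, L_r|_X$, one has $e_Z(X)_T = i(T, L_1|_X \cdot \dots \cdot L_r|_X; X)$, the intersection multiplicity at the generic point of $T$. The key input is then the refined Bézout theorem from \cite{Ful98} (Example~8.4.6 and Proposition~8.3, roughly): for a sequence of divisors on $X$, the sum over components $T$ of $i(T, \cdot)\, \deg T$ — with appropriate positive coefficients — is bounded by $\deg X$ times the product of the divisor degrees, and in particular any single term $i(T, L_1 \cdot \dots \cdot L_r; X) \le \deg X$ because each $\deg L_i|_X$ contributes a factor $1$ and $\deg T \ge 1$. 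The positivity of all the Segre-class contributions (so that one term cannot exceed the total) is exactly what makes the one-sided bound work; this is where Cohen--Macaulayness is not even needed for $X$ since $X$ is a variety (reduced and irreducible), and the intersection-theoretic Bézout holds with no such hypothesis.

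The main obstacle I anticipate is not the inductive bookkeeping but pinning down the precise form of the refined Bézout inequality in \cite{Ful98} that bounds an individual local intersection multiplicity (rather than a weighted sum of them) by $\deg X \cdot \prod \deg H_i$: one must make sure that the "excess" contributions from other components and from embedded or non-proper intersection behavior are all nonnegative, so that discarding them only weakens the inequality. Concretely this means invoking that the coefficient of $[T]$ in the refined intersection product $L_1|_X \cdots L_r|_X$ equals $i(T, L_1 \cdots L_r; X) \cdot [\text{pt}]$ contributions summing (with the contributions of all components, counted with multiplicity and degree) to the full Bézout number $\deg X \cdot \prod \deg H_i$, and that every summand is a nonnegative integer multiple of a positive number. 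Once that is cited correctly from \cite[\S8]{Ful98}, the inequality $e_Z(X)_T \le \deg X \cdot \prod \deg H_i$ is immediate. A secondary point to handle with a line of justification is that the reduction to hyperplanes genuinely preserves $e_Z(X)_T$, i.e.\ that Samuel multiplicity of an ideal in a local ring is insensitive to which ambient projective embedding realizes that local ring; this is clear but worth stating.
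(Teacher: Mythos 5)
Your second paragraph contains the right idea, and it is essentially the paper's proof: by \cref{eg:ci-mult}, $e_Z(X)_T$ is the local intersection number $i(T,D_1\cdots D_r;X)$ with $D_i=H_i\cap X$, and by positivity of all contributions in the refined intersection class $D_1\cdots D_r\cdot[X]$ this single term is bounded by the total degree of that class, which Bezout (via the projection formula, \cite[Example~2.4.3]{Ful98}) evaluates to $\deg X\cdot\prod\deg H_i$. However, your first reduction step is genuinely wrong. Re-embedding by $|\O_{\P^n}(d)|$ multiplies the degree of $X$ by $d^{\dim X}$, not by $d^{\,r}$: the $d$-th Veronese image of $X$ has degree $d^{\dim X}\deg X$, so the claim that $X'$ has degree $\deg X\cdot\prod\deg H_i$ is false, and the bound you would actually obtain, $e_Z(X)_T\le\deg X'=d^{\dim X}\deg X$, is strictly weaker than the asserted one whenever $r<\dim X$. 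Moreover, when the $H_i$ have different degrees there is no single re-embedding under which all of them simultaneously become hyperplane sections (a form of degree $d_i$ is linear only for the embedding by $\O(d_i)$), so the reduction does not even get off the ground in the general case.

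The fix is simply to delete the reduction: apply your refined-Bezout-with-positivity argument directly on $X\subset\P^n$ to the divisors $D_i=H_i\cap X$. The "product of the divisor degrees" is then computed not componentwise on $X$ but by pushing forward to $\P^n$: $(D_1\cdots D_r\cdot[X])_X=(H_1\cdots H_r\cdot[X])_{\P^n}=\prod\deg H_i\cdot\deg X$ by the projection formula and Bezout. Discarding the nonnegative contributions of the other distinguished varieties and using $\deg T\ge 1$ then gives the stated inequality with the correct constant, with no induction on $r$ needed. Your remaining concerns (positivity of the excess contributions, and that Cohen--Macaulayness of $X$ is not required for \cref{eg:ci-mult}) are well placed and handled exactly as you indicate.
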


\begin{proof}
As we recalled in \cref{eg:ci-mult}, letting $D_i = H_i \cap X$ we have $e_Z(X)_T = i(T,D_1\.\dots\.D_r;X)$.
By the definition of intersection product, this number is bounded above
by the intersection product $(D_1\.\dots\.D_r)_X$, which in turns is equal to
$(H_1\.\dots\.H_r\.[X])_{\P^n}$, see \cite[Example~2.4.3]{Ful98}.
Therefore the proposition follows by Bezout theorem \cite[Propostion~8.4]{Ful98}.
\end{proof}

Finally, we recall the following multiplicity bound on cycles 
on projective hypersurfaces due to \cite[Proposition~5]{Puk02}
(cf.\ \cite[Remark~4.4]{dFEM03}).

\begin{proposition}
\label{p:Puk}
Let $X \subset \P^n$ be a smooth hypersurface 
and $Z \subset X$ a pure-dimensional closed scheme with no embedded points. 
Let $k = \codim(Z,X)$, and assume that the cycle $[Z]$ is numerically equivalent
to $m\.c_1(\O_X(1))^k$. Then $\dim\{ x \in X \mid e_x(Z) > m\} < k$.
\end{proposition}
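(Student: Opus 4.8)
The plan is to argue by contradiction, reduce via semicontinuity to a statement about an irreducible subvariety, use the Lefschetz hyperplane theorem to dispose of the cases away from the middle dimension, and bound the remaining multiplicity by projecting from a general point and invoking Bezout.

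Suppose $S := \{x \in X \mid e_x(Z) > m\}$ had dimension $\ge k$. By semicontinuity of multiplicities $S$ is closed, and since $e_x(Z)>0$ precisely on $\Supp Z$ we have $S \subseteq \Supp Z$, so $\dim Z \ge k$ and hence $2k \le \dim X$. Cutting a component of $S$ of dimension $\ge k$ by general hyperplanes, I may fix an irreducible $T \subseteq X$ with $\dim T = k$ and $e_x(Z) > m$ for all $x \in T$; writing $Z = \sum_i a_i T_i$ for the cycle decomposition (available since $Z$ has no embedded points) this reads $e_T(Z) = \sum_{i:\, T \subseteq T_i} a_i\,\mult_T(T_i) \ge m+1$, and I want to contradict $[Z] \equiv m\,c_1(\O_X(1))^k$. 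If $2k < \dim X$, the Lefschetz theorem gives $H^{2k}(X,\Z) = \Z\cdot c_1(\O_X(1))^k$, so $[T_i] \equiv c_i\,c_1(\O_X(1))^k$ with $c_i \in \Z_{>0}$ and $\sum_i a_i c_i = m$; then $e_T(Z) \le \sum_i a_i c_i = m$ would follow from the per-component bound $\mult_T(T_i) \le c_i$, which is the statement for an \emph{irreducible} $Z$ with $[Z] \equiv c\,c_1(\O_X(1))^k$, $c \in \Z_{>0}$.

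For such an irreducible $Z$, I would project $\P^n$ from a general $x \in T$: since $x$ is a smooth point of the degree-$d$ hypersurface $X$, a general line through $x$ meets $X$ in $d-1$ further points, so $\pi_x|_X$ is dominant of degree $d-1$ onto $\P^{n-1}$; the cone $C$ over $\pi_x(Z)$ with vertex $x$ contains $Z$ and has degree $\deg\pi_x(Z)$, so $C \cap X$ is a cycle of degree $d\cdot\deg\pi_x(Z)$ with $Z$ among its components (\cref{p:Bezout1}), giving $md = \deg Z \le d\cdot\deg\pi_x(Z)$; combining with the elementary identity $\deg Z \ge \deg\pi_x(Z) + e_x(Z)$ and $e_x(Z) = e_T(Z)$ bounds $e_T(Z)$ from above. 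The difficulty is that this only yields $e_T(Z) \le m(d-1)$, so the argument has to be refined — iterating on the residual cycle $C\cap X - Z$, which is again a codimension-$k$ cycle on $X$ with numerical class $(\deg\pi_x(Z)-m)\,c_1(\O_X(1))^k$, or projecting from a carefully chosen linear subspace of $T$ — until the constant sharpens to $m$. The middle-dimensional case $2k = \dim X$, where there is no Lefschetz reduction, must be handled separately by a direct positivity input on $H^{2k}(X)$. Getting the projection estimate to close at the sharp value $m$, in both regimes, is what I expect to be the main obstacle.
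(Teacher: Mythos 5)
The paper does not actually prove this proposition; it is quoted from \cite[Proposition~5]{Puk02} (cf.\ \cite[Remark~4.4]{dFEM03}), so your attempt has to stand entirely on its own. Your reduction steps are sound: semicontinuity, cutting down to an irreducible $T$ of dimension exactly $k$, the associativity formula $e_T(Z)=\sum a_i\,e_T(T_i)$ (valid since $Z$ has no embedded points), and, when $2k<\dim X$, the Lefschetz decomposition $[T_i]\equiv c_i\,c_1(\O_X(1))^k$ reducing to the case of irreducible $Z$. But both of the steps you flag as ``to be refined'' are genuine gaps, and the first is the entire content of the proposition. Projecting from a single general point $x\in T$ gives only $e_x(Z)\le m(d-1)$ with $d=\deg X$; since the trivial bound $e_x(Z)\le\deg Z=md$ holds at \emph{every} point of an irreducible $Z$, this is barely progress, and in the application ($m=r^2$) losing the factor $d-1$ destroys the rigidity argument. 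The iteration you propose does not close the gap: writing $C\cap X=Z+R$ with $[R]\equiv(\deg\pi_x(Z)-m)\,c_1(\O_X(1))^k$, the lower bound available for the residual is $e_x(R)\ge e_x(C)-e_x(Z)=\deg\pi_x(Z)-e_x(Z)$, which falls \emph{below} the new coefficient $\deg\pi_x(Z)-m$ precisely because $e_x(Z)>m$; the multiplicity excess does not propagate to $R$, so as stated there is nothing to iterate. The deeper issue is structural: the conclusion $e_x(Z)\le m$ is false at special points (the bad locus is allowed to have dimension up to $k-1$), so no argument that only sees one point of $T$ can succeed. A correct proof must use the fact that the high-multiplicity locus has dimension equal to $\codim(Z,X)$ --- this is exactly what Pukhlikov's cone/secant constructions over all of $T$ accomplish --- and that input is absent from your sketch.

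The second gap is also material rather than a boundary nuisance. The case $2k=\dim X$ occurs in the paper's applications: the proposition is used with $k=2$ and $Z=D\cap D'$ on fourfolds (quintic fourfolds in the proof of \cref{t:smooth}, and the varieties obtained after cutting by hyperplanes in the singular case). There $H^{2k}(X,\Z)$ is not generated by $c_1(\O_X(1))^k$ (a smooth quintic fourfold may contain a plane), so the decomposition of $[Z]$ into component classes $c_i\,c_1(\O_X(1))^k$ with $c_i\in\Z_{>0}$ is unavailable; the hypothesis $[Z]\equiv m\,c_1(\O_X(1))^k$ constrains only the total cycle, and the reduction to irreducible $Z$ must be replaced by an argument handling the cycle as a whole. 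As it stands, neither regime is proved, so the proposal is a plausible opening --- and in the right spirit, since the actual proof is also built on projections and cones --- but not a proof.
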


\begin{remark}
\label{r:Puk}
The example discussed in \cref{r:order-of-vanishing} shows that \cref{p:Puk}
does not hold in general if the hypersurface is singular. 
However, one can always cut down away from the singularities and use 
\cref{p:Bertini} to generalize the proposition to singular hypersurfaces. 
In particular, if $X \subset \P^n$ is a hypersurface with isolated singularities
then $\dim\{ x \in X \mid e_x(Z) > m\} \le k$, 
and we have $e_T(Z) \le m$ for every closed subvariety $T \subset X$ of dimension $k$
that is disjointed from the singular locus of $X$.
\end{remark}

\subsection{Singularities of pairs}

Let $X$ be a positive dimensional normal variety 
and assume that the canonical divisor $K_X$ of $X$ is $\Q$-Cartier. 

We consider \emph{pairs} of the form $(X,Z)$ where $Z = \sum c_i Z_i$ is a \emph{$\Q$-subscheme}, 
i.e., a proper linear combination of proper closed subschemes $Z_i \subset X$ with coeffients $c_i$
in $\Q$. 
We say that the $\Q$-subscheme $Z$ (or the pair $(X,Z)$) is \emph{effective} if $c_i \ge 0$ for all $i$. 
If $\cH$ is a linear system on $X$, then we denote by $(X,c\cH)$ the pair $(X,c\Bs(\cH))$
where $\Bs(\cH)$ is the base scheme of $\cH$.
We will also consider pairs of the form $(X,\fa^c)$ where $\fa \subset \O_X$ is a nonzero 
coherent ideal sheaf and $c \in\Q$, and $(X,\D)$ where $\D$ is a $\Q$-Cartier $\Q$-divisor; 
both versions can be view in an obvious way as special cases of the above definition of pair. 

We will refer to any prime divisor $E$ on a resolution of $X$ as a \emph{divisor over} $X$, 
and denote by $c_X(E) \subset X$ the closure of the center of the corresponding valuation $\val_E$.
With small abuse of terminology, we call $c_X(E)$ the \emph{center} of $E$ in $X$, 
and identify two divisors over $X$ whenever they define the same valuation.

Given a pair $(X,Z)$ and a divisor $E$ over $X$, we define the \emph{log discrepancy}
of $(X,Z)$ along $E$ to be 
\[
a_E(X,Z) := \ord_E(K_{Y/X}) + 1 - \val_E(Z),
\]
where $Y$ is a resolution of $X$ where $E$ lives, $K_{Y/X}$ is the relative canonical divisor
(a $\Q$-divisor), and $\val_E(Z) := \sum c_i \val_E(\I_{Z_i})$
where $\I_{Z_i} \subset \O_X$ is the ideal sheaf of $Z_i$. 
If $Z = 0$, then we simply write $a_E(X)$ (the same quantity is also denoted by $A_X(E)$ in the 
literature).
The \emph{minimal log discrepancy} $\mld_p(X,Z)$ of a pair $(X,Z)$ at a point $p \in X$
is the infimum of the log discrepancies $a_E(X,Z)$ of all divisors $E$ over $X$
with center $p$.

\begin{remark}
Once $E$ is fixed, if $\cH$ is a linear system and $D_i \in \cH$ are general members of it, 
then $a_E(X,\cH) = a_E(X,\bigcap D_i)$. Using resolution of singularities, one also sees
that once a point $p \in X$ is fixed, if $D_i \in \cH$ are general members 
then $\mld_p(X,\cH) = \mld_p(X,\bigcap D_i)$. 
\end{remark}

The next property is an immediate consequence of Shokurov--Koll\'ar's connectedness principle
\cite{Sho92,Kol92}. 

\begin{theorem}[Inversion of Adjunction]
\label{t:inv-of-adj}
Let $(X,Z)$ be an effective pair where $Z = \sum c_iZ_i$, let $H \subset X$ 
be a normal irreducible Cartier divisor
that is not contained in any $Z_i$, and set $Z|_H := \sum c_i (Z_i \cap H)$. 
Suppose $p \in H$ is a closed point where $\mld_p(X,Z + H) \le 0$. 
Then $\mld_p(H,Z|_H) \le 0$.
\end{theorem}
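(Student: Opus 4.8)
The plan is to deduce this from the Shokurov--Koll\'ar connectedness principle, exactly as the statement advertises. First I would recall the content of connectedness: if $f\colon Y \to X$ is a log resolution of the pair $(X,Z+H)$ (with $H$ the normal Cartier divisor in question), and we write
\[
K_Y + \Delta = f^*(K_X + Z + H) + F,
\]
then the locus in $Y$ where the coefficients of $\Delta - F$ are $\ge 1$ — equivalently, where $a_E(X,Z+H) \le 0$ for the corresponding divisor $E$ — is connected in a neighborhood of any fiber of $f$ over a point $p$ where $\mld_p(X,Z+H) \le 0$. I would set things up so that $Y$ also dominates a resolution $\widetilde H$ of $H$, and let $H'$ denote the strict transform of $H$ on $Y$.

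The key step is then a local analysis near the fiber $f^{-1}(p)$. Since $p \in H$ and $H$ appears in the boundary with coefficient $1$, its strict transform $H'$ is one of the divisors with $a_{H'}(X,Z+H) = 0$ (more precisely, $H$ is not contained in any $Z_i$, so $H'$ carries coefficient exactly $1$ in $\Delta$ and $0$ in $F$, hence lies in the non-klt locus $\mathrm{Nklt}(X,Z+H)$). So the non-klt locus meets the fiber over $p$. By the hypothesis $\mld_p(X,Z+H) \le 0$ there is moreover some divisor $E$ with center $p$ and $a_E(X,Z+H) \le 0$; whether or not $E = H'$, connectedness forces the whole non-klt locus to be connected over a neighborhood of $p$, so in particular there must be a component of $\mathrm{Nklt}(X,Z+H)$ other than $H'$ that meets $H'$ over $p$ — unless $H'$ itself already witnesses a worse-than-klt phenomenon transversally, which one handles by a standard adjunction computation on $H'$. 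Restricting to $H'$ and pushing down to $\widetilde H$: the divisors meeting $H'$ produce divisors over $H$ centered at $p$, and by adjunction $K_{H'} = (K_Y + H')|_{H'}$ the discrepancy bookkeeping transfers, giving $a_{E'}(H, Z|_H) \le 0$ for a divisor $E'$ over $H$ with center $p$. That yields $\mld_p(H, Z|_H) \le 0$.

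I expect the main obstacle to be the careful bookkeeping at the divisor $H'$ itself: one must distinguish the case where the non-klt locus near $f^{-1}(p)$ consists only of $H'$ (so that connectedness gives no second component) from the case where it genuinely has another component. In the first case, the needed conclusion comes instead from the adjunction formula applied directly to $H'$: the coefficients of $(\Delta - H')|_{H'} - F|_{H'}$ compute $a_\bullet(H, Z|_H)$, and the existence of a center-$p$ divisor with $a_E(X,Z+H)\le 0$ distinct in behavior from a clean transverse $H'$ forces one of these restricted coefficients to be $\ge 1$. Making this dichotomy precise — and checking that $Z|_H$ is the correct restricted $\Q$-subscheme, i.e.\ that $\val_{E'}(Z|_H)$ matches the restriction of $\val_E(Z)$, which uses that $H$ is Cartier and not contained in any $Z_i$ so that $\I_{Z_i}\cdot\O_H = \I_{Z_i \cap H}$ — is the technical heart of the argument. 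Everything else is the standard translation between the language of minimal log discrepancies and the language of non-klt loci, together with the cited connectedness theorems \cite{Sho92,Kol92}.
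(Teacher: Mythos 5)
Your route is the one the paper intends: the paper offers no argument beyond the citation of the Shokurov--Koll\'ar connectedness principle, and your sketch (log resolution, the strict transform $H'$ sitting in the non-klt locus with log discrepancy $0$, connectedness forcing another non-klt divisor to meet $H'$ over $p$, adjunction along $H'$) is the standard way of extracting the statement from that principle. Two remarks on the set-up, though. The dichotomy you worry about at the end is vacuous: the hypothesis $\mld_p(X,Z+H)\le 0$ supplies a divisor $E$ with center equal to the point $p$, and since $c_X(H')=H$ is positive dimensional one always has $E\ne H'$, so on a log resolution where $E$ appears the non-klt locus never consists of $H'$ alone; there is no ``transverse phenomenon on $H'$'' to handle separately. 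Likewise the compatibility $\I_{Z_i}\cdot\O_H=\I_{Z_i\cap H}$ is simply the definition of the scheme-theoretic intersection and is not where the difficulty lies.

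The genuine gap is in your very last step. What connectedness hands you is a divisor $E_j\ne H'$ with $a_{E_j}(X,Z+H)\le 0$ and $E_j\cap H'\cap f^{-1}(p)\ne\emptyset$; restricting, a component $F$ of $E_j\cap H'$ is a divisor over $H$ with $a_F(H,Z|_H)\le 0$ whose center \emph{contains} $p$ but need not \emph{equal} $\{p\}$. The paper defines $\mld_p$ as an infimum over divisors with center exactly $p$, and ``$p$ lies in the non-klt locus'' does not imply ``$\mld_p\le 0$'': for $(\A^2,L)$ with $L$ a line and $p\in L$ one has $a_L=0$, yet $\mld_p(\A^2,L)=1$. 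So an additional argument is required to produce a non-klt place of $(H,Z|_H)$ centered precisely at $p$. When $a_F<0$ this is automatic, since failure of log canonicity at the generic point of $c_H(F)$ forces $\mld_q=-\infty$ at every closed point $q\in c_H(F)$; but in the boundary case $a_F=0$ it is not, and one must either perturb first (a tie-breaking modification of $Z$ that isolates $\{p\}$ as a non-klt center before invoking connectedness) or appeal to the precise pointwise form of inversion of adjunction (Ein--Musta\c{t}\u{a}--Yasuda, Kawakita). That upgrade from ``$p$ is in the non-klt locus of $(H,Z|_H)$'' to ``$\mld_p(H,Z|_H)\le 0$'' is the actual technical heart of the proof, and it is the one step your sketch passes over.
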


\begin{remark}
The converse of \cref{t:inv-of-adj} also holds, namely, we have 
$\mld_p(X,Z + H) \le 0$ whenever $\mld_p(H,Z|_H) \le 0$.
This direction is a simple consequence of the adjunction formula.
\end{remark}

Assuming $\dim X \ge 2$, a pair $(X,Z)$ is said to be \emph{terminal} (resp., \emph{canonical})
if $a_E(X,Z) > 1$ (resp., $\ge 1$) for all exceptional divisors $E$ over $X$. 
A pair $(X,Z)$ is said to be \emph{log terminal} (resp., \emph{log canonical})
if $a_E(X,Z) > 0$ (resp., $\ge 0$) for all divisors $E$ over $X$. 

Let $(X,Z)$ be an effective pair. 
If $\dim X \ge 2$ and $X$ is canonical, then we denote by $\ct(X,Z)$ the \emph{canonical threshold},  
namely, the supremum of the set of coefficients $c$ such that $(X,cZ)$ is canonical.
If $X$ is log canonical, then we denote by $\lct(X,Z)$ the \emph{log canonical threshold}, 
namely, the supremum such that $(X,cZ)$ is log canonical.
For a point $p \in X$, we denote by $\ct_p(X,Z)$ and $\lct_p(X,Z)$ the 
canonical and log canonical thresholds of 
the pair restricted to any sufficiently small open neighborhood of $p$. 

The next proposition relates thresholds to multiplicities. 
The first property stated in the proposition 
follows from inversion of adjunction (e.g., see \cite[Proposition~8.8]{dF13}).
The second is proved in \cite{dFEM04}. 

\begin{proposition}
\label{t:mult-thresholds}
Let $X$ be a smooth $n$-dimensional variety and $p \in X$ a closed point.  
\begin{enumerate}
\item
If $n \ge 2$, then for any effective divisor $D$ on $X$, we have $e_p(D)\ge 1/\ct_p(X,D)$. 
\item
For any zero-dimensional subscheme $Z$ of $X$, we have
$e_Z(X)_p\ge n^n/\lct_p(X,Z)^n$.
\end{enumerate}
\end{proposition}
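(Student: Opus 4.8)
The plan is to prove the two parts by quite different routes. For part (1) the guiding idea is that the canonical threshold of a divisor on a smooth variety is detected on a general curve through $p$, where it reduces to an elementary computation with a local equation. Rephrasing, we must show $\ct_p(X,D)\ge 1/e_p(D)$, equivalently $\ord_E(D)\le e_p(D)\,(a_E(X)-1)$ for every exceptional divisor $E$ over $X$ with $p\in c_X(E)$. If $c_X(E)\not\subseteq D$ then $\ord_E(D)=0$ and there is nothing to check, so assume $W:=c_X(E)\subseteq D$; since $E$ is exceptional, $\codim_X W\ge 2$, and after replacing $X$ by a general complete intersection of $\dim W$ hyperplane sections through a general point of $W$ — an operation that preserves $a_E$ and, by \cref{p:Bertini}, does not increase the multiplicity of $D$ at the relevant point — we are reduced to the case in which $E$ is centered at a closed point. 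Renaming, it suffices to prove: if $X$ is smooth of dimension $n\ge 2$, $p\in X$ is a closed point, $D$ is an effective divisor, and $E$ is a divisor over $X$ with $c_X(E)=\{p\}$, then $a_E(X,cD)\ge 1$ whenever $c<1/e_p(D)$.

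For this I would argue by contradiction, as in \cite{dF13}. If $a_E(X,cD)<1$, choose $n-1$ general hyperplane sections $H_1,\dots,H_{n-1}$ of $X$ through $p$. Since $c_X(E)=\{p\}$ we have $\val_E(H_i)\ge 1$, so
\[
a_E\Big(X,\;cD+\sum_{i=1}^{n-1}H_i\Big)=a_E(X,cD)-\sum_{i=1}^{n-1}\val_E(H_i)<1-(n-1)\le 0,
\]
whence $\mld_p(X,cD+\sum_i H_i)\le 0$. Applying \cref{t:inv-of-adj} repeatedly, stripping off one general $H_i$ at each step, we descend to the smooth curve $C=H_1\cap\dots\cap H_{n-1}$ with $\mld_p(C,cD|_C)\le 0$ — at every stage the ambient scheme is smooth (hence normal and irreducible) and the hyperplane removed is contained in no boundary component, so the hypotheses of \cref{t:inv-of-adj} are met. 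On the other hand, iterating \cref{p:Bertini} $n-1$ times (with $\cH$ the hyperplane of the dual space parametrizing the hyperplanes through $p$), and using that an effective Cartier divisor on a smooth variety is Cohen--Macaulay, gives $e_p(D\cap H_1\cap\dots\cap H_{n-1})=e_p(D)$; hence near $p$ one has $D|_C=e_p(D)\cdot p$, so $\mld_p(C,cD|_C)=1-c\,e_p(D)>0$, a contradiction. The only step demanding care is the first reduction: one must know that the log discrepancy of $E$ is unchanged when $E$ is restricted to a general linear section through a general point of its center. The inversion-of-adjunction descent is then routine bookkeeping.

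Part (2) is the theorem of de Fernex--Ein--Musta\c t\u a \cite{dFEM04}, which I would invoke directly; still, the shape of its proof is worth recalling. After localizing at $p$ and discarding the trivial case $p\notin Z$, let $\fra\subset\O_{X,p}$ be the ideal of $Z$, an $\fm_p$-primary ideal in a regular local ring of dimension $n$; the claim is $e(\fra)\cdot\lct(\fra)^n\ge n^n$. The essential case is that of a monomial ideal, to which the general case is reduced by a degeneration argument resting on the lower semicontinuity of $\lct$ together with a compatible control of the multiplicity. For a monomial ideal with Newton polyhedron $P$, Howald's formula for multiplier ideals identifies $\lct(\fra)$ as the largest $\lambda>0$ with $(1,\dots,1)\in\lambda P$, so that $v:=(1,\dots,1)/\lct(\fra)$ lies on $\partial P$, while $e(\fra)=n!\,\vol(\R^n_{\ge0}\setminus P)$. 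A supporting functional $w$ of $P$ at $v$ necessarily has all coordinates positive, for otherwise $\R^n_{\ge0}\setminus P$ would be unbounded; the simplex $\{x\ge0:\langle w,x\rangle\le\langle w,v\rangle\}$ is contained in $\R^n_{\ge0}\setminus P$, and its volume is $\langle w,v\rangle^n/(n!\prod_i w_i)$. Since $\langle w,v\rangle=\lct(\fra)^{-1}\sum_i w_i$, the arithmetic--geometric mean inequality $\prod_i w_i\le(n^{-1}\sum_i w_i)^n$ gives $\vol(\R^n_{\ge0}\setminus P)\ge n^n/(n!\,\lct(\fra)^n)$, that is $e(\fra)\ge n^n/\lct(\fra)^n$, as wanted in the monomial case. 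The real obstacle — and where \cite{dFEM04} does the substantive work — is precisely the reduction of an arbitrary $\fm_p$-primary ideal to the monomial case in a way compatible with both invariants.
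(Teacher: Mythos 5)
Your proposal is correct and matches the paper's approach: the paper does not actually prove this proposition but simply cites \cite[Proposition~8.8]{dF13} for part (1), remarking that it "follows from inversion of adjunction," and cites \cite{dFEM04} for part (2) --- which is precisely what you do. Your cut-down-to-a-point-center-and-descend-via-inversion-of-adjunction argument for (1) is the standard proof underlying that citation (the generic-restriction lemma you flag is indeed the one nontrivial ingredient, and it is a well-known fact), and your treatment of (2), invoking \cite{dFEM04} with a correct sketch of the monomial/AM--GM argument, coincides with the paper's.
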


The \emph{multiplier ideal} of an effective pair $(X,Z)$, where $Z = \sum c_i Z_i$, is defined by taking 
a log resolution $f \colon Y \to X$ of the pair and setting 
\[
\J(X,Z) := f_*(\ru{K_{Y/X} - f^{-1}(Z)})
\]
where $f^{-1}(Z)$ denotes the $\Q$-divisor $\sum c_i f^{-1}(Z_i)$.
It is a standard computation to check that the definition is independent of the choice of resolution. 
Multiplier ideals satisfy the following vanishing theorem due to Nadel (see \cite[Section~9.4]{Laz04}).

\begin{theorem}[Nadel Vanishing Theorem]
\label{t:Nadel}
Let $(X,Z)$ be an effective pair with $Z = \sum c_iZ_i$, 
let $A_i$ be Cartier divisors on $X$ such that $\O_X(A_i) \otimes \I_{Z_i}$ is globally generated
for every $i$, and let $A$ be a Cartier divisor such that
the $\Q$-divisor $A - K_X - \sum c_iA_i$ is nef and big. Then 
$H^j(X,\O_X(A) \otimes \J(X,Z)) = 0$ for all $j \ge 1$.
\end{theorem}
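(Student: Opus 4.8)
The plan is to reduce the assertion to the Kawamata--Viehweg vanishing theorem by pulling everything back to a log resolution, following the standard argument (see \cite[Section~9.4]{Laz04}).

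First I would fix a log resolution $f \colon Y \to X$ of the pair $(X,Z)$, so that $Y$ is smooth, the relative canonical $K_{Y/X}$ is an $f$-exceptional $\Q$-divisor, each $f^{-1}(Z_i)$ is an effective divisor with $\O_Y(-f^{-1}(Z_i)) = \I_{Z_i}\cdot\O_Y$, and $\Exc(f)\cup\Supp f^{-1}(Z)$ has simple normal crossing support, where $f^{-1}(Z) := \sum c_i f^{-1}(Z_i)$. Put $N := \ru{K_{Y/X} - f^{-1}(Z)}$, so that $f_*\O_Y(N) = \J(X,Z)$ by the very definition of the multiplier ideal, while $R^qf_*\O_Y(N) = 0$ for $q \ge 1$ by the local vanishing theorem for multiplier ideals \cite[Section~9.4]{Laz04} (whose proof applies verbatim when $K_X$ is merely $\Q$-Cartier, the only change being that $K_{Y/X}$ need not be integral). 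Since $\O_X(A)$ is invertible, the projection formula gives $R^qf_*\O_Y(f^*A + N) \cong \O_X(A)\otimes R^qf_*\O_Y(N)$, which vanishes for $q \ge 1$ and equals $\O_X(A)\otimes\J(X,Z)$ for $q = 0$; hence the Leray spectral sequence degenerates at $E_2$ and yields an isomorphism
\[
H^j\big(X,\O_X(A)\otimes\J(X,Z)\big) \;\cong\; H^j\big(Y,\O_Y(f^*A + N)\big)
\]
for every $j$. It therefore suffices to show that the right-hand side vanishes for $j \ge 1$.

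Next I would rewrite the line bundle on $Y$ in a form to which Kawamata--Viehweg vanishing applies. Since $K_Y$ and $f^*A$ are integral divisors, one has $f^*A + N = K_Y + \ru{P}$ with $P := f^*(A - K_X) - f^{-1}(Z)$, and $\ru{P} = P + \{-P\}$, where the fractional part $\{-P\}$ is effective, has all coefficients $<1$, and is supported on the simple normal crossing divisor $\Exc(f)\cup\Supp f^{-1}(Z)$. It remains to check that the $\Q$-divisor $P$ is nef and big. For this I would decompose
\[
P \;=\; f^*\!\Big(A - K_X - \sum_i c_iA_i\Big) \;+\; \sum_i c_i\big(f^*A_i - f^{-1}(Z_i)\big).
\]
The first summand is the pullback under the birational morphism $f$ of a nef and big $\Q$-Cartier $\Q$-divisor, hence is itself nef and big on $Y$. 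For each $i$, the sheaf $\O_Y\big(f^*A_i - f^{-1}(Z_i)\big)$ is a quotient of $f^*\big(\O_X(A_i)\otimes\I_{Z_i}\big)$, hence is globally generated and in particular nef; and $c_i \ge 0$ because the pair $(X,Z)$ is effective. Thus $P$ is nef and big, so $f^*A + N = K_Y + \{-P\} + P$ has exactly the shape required by Kawamata--Viehweg vanishing on the smooth projective variety $Y$, whence $H^j\big(Y,\O_Y(f^*A+N)\big) = 0$ for $j \ge 1$. Combined with the isomorphism above, this proves the theorem.

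I expect the only delicate point to be the bookkeeping in the last paragraph: identifying $f^*A + N$ with $K_Y$ plus an integral divisor whose associated $\Q$-divisor is nef and big with fractional part on a simple normal crossing divisor. The hypothesis that $\O_X(A_i)\otimes\I_{Z_i}$ is globally generated is used precisely to make each $f^*A_i - f^{-1}(Z_i)$ nef, and the effectivity of $(X,Z)$ to keep the coefficients $c_i$ nonnegative; after that, everything is a direct appeal to local vanishing and to Kawamata--Viehweg vanishing, with the possible singularities of $X$ entering only through the standing assumption that $K_X$ is $\Q$-Cartier.
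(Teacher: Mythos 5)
Your argument is correct and is essentially the standard proof of Nadel vanishing that the paper itself does not reproduce but simply cites from \cite[Section~9.4]{Laz04}: reduce to $Y$ via local vanishing and Leray, then write $f^*A+N=K_Y+\ru{P}$ with $P$ nef and big and with fractional part supported on the simple normal crossing divisor, and apply Kawamata--Viehweg. Your bookkeeping correctly isolates where each hypothesis enters (global generation of $\O_X(A_i)\otimes\I_{Z_i}$ makes $f^*A_i-f^{-1}(Z_i)$ nef, effectivity keeps the $c_i\ge 0$, and the $\Q$-Cartier assumption on $K_X$ is all that is needed for $K_{Y/X}$ and for the fractional part to sit on the exceptional-plus-$f^{-1}(Z)$ locus), so there is nothing to add.
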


\subsection{Bounds on colength}

Here we review the main new ingredient of \cite{Zhu20} as revisited in \cite{Kol19}. 

\begin{proposition}
\label{p:colength-mult-ideal}
Let $X$ be a smooth variety of dimension $n \ge 2$,
and $Z$ an effective $\Q$-scheme on $X$ with $\lct(X,Z) \le 1$.
If $\S \subset X$ is the subscheme defined by the multiplier ideal $\J(X,2Z) \subset \O_X$,
then 
\[
l(\O_\S) \ge \tfrac 12 3^n.
\]
\end{proposition}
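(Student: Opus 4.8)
The plan is to run a lower-bound estimate on the colength of the multiplier ideal $\J(X,2Z)$ by using the local nature of the question together with the second inequality of \cref{t:mult-thresholds} and a deformation-to-the-normal-cone/degeneration argument. Since $\lct(X,Z)\le 1$, there is a closed point $p\in X$ with $\lct_p(X,Z)\le 1$, equivalently $\lct_p(X,2Z)\le \tfrac12$, so $(X,2Z)$ fails to be log canonical at $p$ and hence $\J(X,2Z)\subset\fm_p$; in particular $\S$ is nonempty and supported (at least in part) at $p$. Everything is local, so I would pass to a small neighborhood of $p$ and work with $\O:=\O_{X,p}$, a regular local ring of dimension $n$, and with the ideal $\fj:=\J(X,2Z)_p\subset\O$ defining the local component of $\S$ at $p$.

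The key step is to relate the colength $l(\O/\fj)$ to a log canonical threshold so that \cref{t:mult-thresholds}(2) applies. For this I would use the basic behaviour of multiplier ideals under passing to the cosupport: the subscheme $\S$ cut out by $\J(X,2Z)$ has the property that $\lct(X,\S)\le 1$ — indeed, on a common log resolution the discrepancy divisor $K_{Y/X}-f^{-1}(2Z)$ has some coefficient $\le -1$ by the hypothesis $\lct(X,Z)\le1$, and the round-up defining $\J(X,2Z)$ then forces the exceptional locus to witness $\lct(X,\S)\le 1$; more precisely $(X,\S)$ is not log terminal, so $\lct_p(X,\S)\le 1$. Now $\S$ is a zero-dimensional scheme in a neighborhood of $p$ (its support is contained in the non-lc locus of $(X,2Z)$, which is contained in $c_X(E)$ for divisors $E$ computing the failure, and by choosing the point $p$ on a minimal such center we may reduce to the case where the relevant component has dimension $0$; alternatively one cuts by general hyperplanes through $p$ using \cref{p:Bertini}-type restriction to reduce to the zero-dimensional case without changing the relevant colength estimate). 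Applying \cref{t:mult-thresholds}(2) to the zero-dimensional scheme $\S$ at $p$ gives
\[
e_\S(X)_p \;\ge\; \frac{n^n}{\lct_p(X,\S)^n}\;\ge\; n^n,
\]
and since $\S$ is zero-dimensional, $e_\S(X)_p = l(\O/\fj) = l(\O_{\S,p})$.

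From $l(\O_\S)\ge n^n$ one still has to get down to the sharper constant $\tfrac12 3^n$, which is smaller than $n^n$ for $n\ge 2$ (with equality-flavoured matching only at $n=2$, where $n^n=4>\tfrac12 3^n=4.5$ fails — so the naive bound is in fact \emph{not} enough for small $n$, and this is the real obstacle). So the main work, and the place I expect to lose the most time, is the low-dimensional bookkeeping: for $n=2,3$ one cannot simply invoke \cref{t:mult-thresholds}(2) but must instead argue directly on a log resolution, tracking the multiplicities of the strict transform of $Z$ and of the exceptional divisors and using that $\J(X,2Z)$ is cut out by $\lceil K_{Y/X}-f^{-1}(2Z)\rceil$; the point is that doubling the coefficient forces at least two ``rounds'' of vanishing, which is exactly where the factor $\tfrac12\cdot 3^n$ (rather than $2^n$ or $n^n$) comes from — it reflects that the colength of $\J(X,2Z)$ grows like that of a scheme defined by the square of a maximal-type ideal. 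Concretely, I would combine the log-canonical-threshold estimate above with the inclusion $\J(X,2Z)\subseteq \J(X,Z)^{?}$-type subadditivity/restriction statements, or directly estimate $\dim_k \O/\fj$ by comparing with $\J$ of a monomial model, to upgrade $n^n$ to $\tfrac12 3^n$; in the generic dimension range $n\ge$ (something small) the estimate $n^n\ge \tfrac12 3^n$ already suffices and only the handful of remaining small $n$ need the hands-on resolution argument. Assembling these, $l(\O_\S)\ge l(\O_{\S,p})\ge \tfrac12 3^n$ follows.
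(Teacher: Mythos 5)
Your first step is essentially the paper's: you correctly identify that one must pass from $\lct(X,Z)\le 1$ to the statement that $(X,\S)$ is not log canonical (this is \cref{l:double-log-discr}), and that the problem then becomes a purely local colength bound for an $\fm$-primary ideal $\fa$ with $\lct(\fa)<1$. (The reduction to $\S$ zero-dimensional is also simpler than you make it: if $\dim\S>0$ then $l(\O_\S)=\infty$ and there is nothing to prove.)

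The second step, however, has a genuine gap, in two respects. First, \cref{t:mult-thresholds}(2) bounds the \emph{Samuel multiplicity} $e_\S(X)_p$ from below, and your identification $e_\S(X)_p=l(\O_{\S,p})$ is false for a general zero-dimensional subscheme: by \cref{eg:e_pZ=e_ZX_p} it holds only when $\I_\S$ is generated by a regular sequence, and in general one has $e_\fa(R)\ge l(R/\fa)$ (e.g.\ $e(\fm^k)=k^n$ while $l(R/\fm^k)=\binom{n+k-1}{n}$). Since the inequality between multiplicity and colength goes the wrong way, a lower bound on $e_\S(X)_p$ yields no lower bound on $l(\O_\S)$; this cannot be patched by a constant. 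Second, even granting the transfer, you yourself observe that $n^n\ge\tfrac12 3^n$ fails at $n=2$, and the "low-dimensional bookkeeping" you defer is never carried out. The paper's actual argument replaces both steps by \cref{l:sharp-bound-length}: reduce (by the standard degeneration as in \cite{dFEM04}) to a monomial ideal $\fa\subset k[x_1,\dots,x_n]$; the condition $\lct(\fa)<1$ produces a simplex $T=\{u\in\R_{\ge0}^n\mid \sum a_iu_i\le\sum a_i\}$ disjoint from the Newton polyhedron, so $l(R/\fa)\ge|T\cap\Z^n|$; and the count $|T\cap\Z^n|\ge\tfrac12 3^n$ follows because for each $u\in\{0,1,2\}^n$ at least one of $u$ and $2-u$ lies in $T$. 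This lattice-point count is the missing idea: it bounds the colength directly, uniformly in $n\ge2$, and is where the constant $\tfrac12 3^n$ actually comes from.
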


We briefly review the proof. We start with two lemmas. 

\begin{lemma}
\label{l:double-log-discr}
Let $(X,Z)$ be an effective pair.
Then for every divisor $E$ over $X$ we have
\[
a_E(X,\J(X,2Z)) < 2 a_E(X,Z).
\]
In particular, if $(X,Z)$ is not log terminal, then 
$(X,\J(X,2Z))$ is not log canonical.
\end{lemma}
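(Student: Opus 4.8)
\emph{Plan of proof.} The plan is to deduce the inequality from the valuational description of multiplier ideals, after which it becomes a one-line computation with log discrepancies. Rewriting the claim via $a_E(X,\fa) = a_E(X) - \val_E(\fa)$ and $a_E(X,Z) = a_E(X) - \val_E(Z)$, what must be shown is exactly $\val_E(\J(X,2Z)) > 2\val_E(Z) - a_E(X)$ for every divisor $E$ over $X$; so I need a \emph{lower} bound on $\val_E(\J(X,2Z))$, which comes from the (easy) fact that membership in a multiplier ideal forces large vanishing order along every divisor over $X$.

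First I would record the following: for any effective $\Q$-subscheme $W$ on $X$, a local section $g$ of $\J(X,W)$ satisfies $\ord_E(g) + a_E(X) > \val_E(W)$ for every divisor $E$ over $X$. Only this implication of the standard valuational characterization is needed, and it is immediate from the definition $\J(X,W) = f_*(\ru{K_{Y/X} - f^{-1}(W)})$: fixing $E$, choose a log resolution $f\colon Y\to X$ of $(X,W)$ on which $E$ occurs as a prime divisor (such a resolution exists and still computes $\J(X,W)$ by independence of the log resolution), so that $g\in\J(X,W)$ forces $\ord_E(f^*g) \ge \lfloor \val_E(W) - \ord_E(K_{Y/X})\rfloor$; since the left-hand side is an integer it is $> \val_E(W) - \ord_E(K_{Y/X}) - 1$, and recalling $a_E(X) = \ord_E(K_{Y/X}) + 1$ gives the stated inequality.

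Applying this with $W = 2Z$ and using $\val_E(2Z) = 2\val_E(Z)$, every local generator $g$ of $\J(X,2Z)$ at $c_X(E)$ satisfies $\ord_E(g) > 2\val_E(Z) - a_E(X)$; since $\val_E(\J(X,2Z))$ is the minimum of $\ord_E(g)$ over a finite generating set, the strict inequality survives, giving $\val_E(\J(X,2Z)) > 2\val_E(Z) - a_E(X)$. Hence
\[
a_E(X,\J(X,2Z)) = a_E(X) - \val_E(\J(X,2Z)) < a_E(X) - \bigl(2\val_E(Z) - a_E(X)\bigr) = 2\bigl(a_E(X) - \val_E(Z)\bigr) = 2a_E(X,Z),
\]
which is the desired inequality. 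For the last assertion, if $(X,Z)$ is not log terminal pick $E$ over $X$ with $a_E(X,Z)\le 0$; then $a_E(X,\J(X,2Z)) < 2a_E(X,Z) \le 0$, so $(X,\J(X,2Z))$ is not log canonical. There is no substantive obstacle here: the only non-bookkeeping input is the membership criterion in the second paragraph, for which just the easy implication is used. The one point to handle carefully is the rounding step — it must land on a \emph{strict} inequality — together with the observation that $\val_E$ of an ideal sheaf is attained on finitely many generators, so that strictness is preserved when passing to the minimum.
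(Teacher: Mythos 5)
Your proposal is correct and follows essentially the same route as the paper: both extract the inequality $\ord_E(\J(X,2Z)) \ge \rd{2\val_E(Z)-\ord_E(K_{Y/X})} > 2\val_E(Z)-\ord_E(K_{Y/X})-1$ from the definition of the multiplier ideal on a log resolution where $E$ appears, and then rearrange to get $a_E(X,\J(X,2Z)) < 2a_E(X,Z)$; passing through local sections of the ideal rather than directly through $\ord_E(\J\cdot\O_Y)$ is only a cosmetic difference. The only nitpick is that the strictness comes from $\rd{x} > x-1$ itself, not from the integrality of the left-hand side, but the chain of inequalities is valid as written.
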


\begin{proof}
Denote for short $\J =\J(X,2Z)$. 
We may assume that $E$ is a prime divisor on a log resolution $f \colon Y \to X$
of $(X,Z)$. Set $k_E = \ord_E(K_{Y/X})$, $z_E = \ord_E(Z)$, and $j_E = \ord_E(\J)$.
By definition, $\J = f_*\O_Y(- \rd{2f^{-1}(Z) - K_{Y/X}})$, hence
\[
j_E \ge \ord_E(\rd{2f^{-1}(Z) - K_{Y/X}}) = \rd{2z_E - k_E} > 2z_E - k_E - 1.
\]
This implies that 
\[
a_E(X,\J) = k_E + 1 - j_E < 2k_E + 2 - 2z_E = 2 a_E(X,Z),
\]
as claimed.
\end{proof}

\begin{lemma}
\label{l:sharp-bound-length}
Let $R = \O_{X,x}$ be the local ring of a variety $X$ at a regular point $x$. 
Assume that $\dim R = n \ge 2$, and let $\fa \subset R$ be an $\fm$-primary ideal with $\lct(\fa) < 1$.
Then 
\[
l(R/\fa) \ge \tfrac 12 3^n.
\]
\end{lemma}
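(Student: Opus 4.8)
Lemma \ref{l:sharp-bound-length} — proof plan.

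The plan is to reduce the statement to the multiplicity bound for zero-dimensional schemes recorded in \cref{t:mult-thresholds}(2) and then improve the resulting constant by an elementary induction on the length. Set $\S = \Spec(R/\fa)$, a zero-dimensional subscheme of $X$ supported at $x$, and write $l = l(R/\fa) = e_\S(X)_x$ (the last equality because $x$ is an irreducible component of $\S$, so the Samuel multiplicity of $\fa$ equals its colength, cf.\ \cref{eg:e_pZ=e_ZX_p}). The hypothesis $\lct(\fa) < 1$ combined with \cref{t:mult-thresholds}(2) already gives $l \ge n^n/\lct_x(X,\S)^n > n^n$. So the weaker bound $l > n^n$ is free; the content of the lemma is that for small $n$ one still has $l \ge \tfrac12 3^n$, and for $n$ large one has $n^n \ge \tfrac12 3^n$. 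Indeed $n^n \ge \tfrac12 3^n$ holds as soon as $n \ge 2$ (for $n=2$: $4 \ge 4.5$ fails, for $n=3$: $27 \ge 13.5$ holds), so the only genuinely tight cases are $n = 2$ and possibly $n=3$; everything else follows immediately from $l > n^n \ge \tfrac12 3^n$.

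For the remaining low-dimensional cases one cannot merely invoke \cref{t:mult-thresholds}(2) since $n^n$ narrowly misses $\tfrac12 3^n$ when $n=2$. The plan here is to argue by induction on $n$, slicing by a general hyperplane through $x$. Let $H \subset X$ be a general smooth hypersurface section through $x$; by generality $H$ is not contained in any component of $\S$ and $\fa|_H$ (the image of $\fa$ in $\O_{H,x}$) is $\fm_H$-primary with $\dim \O_{H,x} = n-1$. One checks, using inversion of adjunction (\cref{t:inv-of-adj}) or directly via a log resolution adapted to $H$, that $\lct_x(\O_{H,x}, \fa|_H) \le \lct_x(R,\fa) < 1$, so the inductive hypothesis applies to $\fa|_H$. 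It then remains to relate $l(R/\fa)$ to $l(\O_{H,x}/\fa|_H)$: since $H$ is a general Cartier divisor through $x$ cut out by a generic linear form, multiplication by its equation is injective on $R/\fa$ modulo nothing of positive length, and more precisely $l(\O_{H,x}/\fa|_H) \le e_{\fm}(\fa) = l(R/\fa)$ by the associativity/multiplicativity of Samuel multiplicity under general hyperplane cuts (this is exactly the inequality $e_{\S \cap H}(H)_x \le e_\S(X)_x$ that is a standard consequence of the constructions in \cite{Ful98}, in the same spirit as \cref{p:Bezout1}). Combined with the inductive bound $l(\O_{H,x}/\fa|_H) \ge \tfrac12 3^{n-1}$, this alone only gives $l \ge \tfrac12 3^{n-1}$, which is too weak; so one instead keeps track of the \emph{drop} in length under the slice and shows $l(R/\fa) \ge 3\, l(\O_{H,x}/\fa|_H)$, the factor $3$ coming from the fact that a general line meets $\S$ with multiplicity $\ge 3$ near $x$ (because $\lct < 1$ forces $e_\S \ge 3$ there, indeed $e_x(\S) \ge 1/\lct$ is already $> 1$ hence $\ge 2$, and a further Bertini/genericity argument as in \cref{p:Bertini} improves this to $3$ once $\lct_x < 1$). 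The base case $n = 1$ (where $\lct(\fa) < 1$ on a regular curve germ means $\ord_x(\fa) \ge 2$, hence $l(R/\fa) \ge 2 \ge \tfrac12 \cdot 3$ — here $\tfrac12 3^1 = 1.5$) is immediate; actually one wants the base case to read $l \ge 3^n/2$ with a slightly cleaner genericity input, so it may be more convenient to take $n=2$ as the base and verify $l \ge 5$ directly from $e_x(\S) \ge 1/\lct > 1$ together with the observation that $e_x(\S) = 2$ and $l = 4$ would force $\fa$ to be (up to coordinates) the square of the maximal ideal, for which $\lct = 1$, a contradiction — so $l \ge 5 \ge \tfrac92$.

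The main obstacle I anticipate is the slicing step: verifying cleanly that passing to a general hyperplane through $x$ simultaneously (i) cuts the length by a factor of at least $3$ rather than an unpredictable amount, and (ii) does not increase the log canonical threshold, so that the induction closes with the right constant. Point (ii) is routine via inversion of adjunction, but point (i) requires care — the honest statement is a lower bound on $e_\S(X)_x$ in terms of $\lct_x$ combined with the behaviour of multiplicity under general hyperplane sections, and getting the constant $3$ (not just $2$) is where the hypothesis $\lct < 1$, as opposed to $\lct \le 1$, is essential. An alternative that sidesteps the induction entirely would be to prove a sharpened form of \cref{t:mult-thresholds}(2) giving $e_\S(X)_x \ge \tfrac12 3^n \lct_x^{-n}$ directly — but since the paper only cites the $n^n$ bound, I would present the induction, whose only non-formal inputs are the multiplicity bound of \cref{t:mult-thresholds}, the hyperplane-section properties of multiplicity from \cite{Ful98} used in \cref{p:Bertini,p:Bezout1}, and inversion of adjunction.
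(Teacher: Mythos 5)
Your proposal has two genuine gaps, and either one is fatal. First, the identification $l(R/\fa) = e_\S(X)_x$ at the outset is false: \cref{eg:e_pZ=e_ZX_p} equates the length with the Samuel multiplicity only when the subscheme is \emph{regularly embedded}, i.e.\ when $\fa$ is generated by a regular sequence. For a general $\fm$-primary ideal one only has $l(R/\fa) \le e(\fa)$ (pass to a minimal reduction $\fq \subset \fa$: then $e(\fa) = e(\fq) = l(R/\fq) \ge l(R/\fa)$), so the lower bound $e(\fa) > n^n$ from \cref{t:mult-thresholds}(2) gives no lower bound on the colength, and your ``free'' case $n \ge 3$ collapses. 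Indeed the inequality $l(R/\fa) \ge n^n/\lct(\fa)^n$ is simply false: for $\fa = \fm^3$ in dimension $2$ one has $\lct = 2/3$, hence $n^n/\lct^n = 9$, while $l(R/\fm^3) = 6$. (The whole point of this lemma --- the new ingredient from \cite{Zhu20} --- is that the colength, which is what Nadel vanishing controls, satisfies a genuinely weaker bound than the multiplicity, with $\tfrac12 3^n$ replacing $n^n$.) Second, the inductive step $l(R/\fa) \ge 3\, l(\O_{H,x}/\fa|_H)$ is also false: the same ideal $\fm^3 \subset k[[x,y]]$ has $\lct = 2/3 < 1$ and colength $6$, while its restriction to a general line through the origin has colength $3$, so the ratio is exactly $2$, not $3$. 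A factor of $2$ per slice only yields a bound of order $2^n$, well below $\tfrac12 3^n$; you correctly flagged this step as the main obstacle, and it cannot be repaired in the form stated.

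The paper's argument is entirely different and does not slice. One first reduces to the case of a monomial ideal in $k[x_1,\dots,x_n]$ by the standard degeneration argument from the proof of \cite[Theorem~1.1]{dFEM04}, which preserves the colength and does not increase the log canonical threshold. For a monomial ideal, $\lct(\fa) < 1$ produces, via the Newton polyhedron description of the threshold, positive weights $a_i$ such that the simplex $T = \{u \in \R_{\ge 0}^n \mid \sum a_i u_i \le \sum a_i\}$ is disjoint from $P(\fa)$, whence $l(R/\fa) \ge |T \cap \Z^n|$. Finally, the lattice points are counted by the involution $u \mapsto (2-u_1,\dots,2-u_n)$ on $\{0,1,2\}^n$: for each such $u$ at least one of the pair lies in $T$, giving $|T \cap \Z^n| \ge \tfrac12 3^n$.
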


\begin{proof}
A standard argument (see, e.g., the proof of \cite[Theorem~1.1]{dFEM04}) allows to reduce to the case where $\fa$
is a monomial ideal in a polynomial ring $k[x_1,\dots,x_n]$. 
The condition that $\lct(\fa) < 1$ implies that
there are positive numbers $a_i$ such that the tetrahedron 
\[
T = \big\{ u \in \R_{\ge 0}^n \mid \sum a_i u_i \le \sum a_i \big\}
\]
is disjoint from the Newton polyhedron $P(\fa)$. Hence $l(R/\fa) \ge | T \cap \Z^n |$.
The upperbound $\tfrac 12 3^n$ accounts for the fact that 
for any $(u_1,\dots,u_n) \in \Z^n$ with $u_i \in \{0,1,2\}$ for all $i$, either 
$(u_1,\dots,u_n)$ or $(2-u_1,\dots,2-u_n)$ belongs to $T$. 
\end{proof}

\begin{proof}[Proof of \cref{p:colength-mult-ideal}]
We may assume that $\S$ is zero dimensional as otherwise $l(\O_\S) = \infty$. 
Since $(X,Z)$ is not log terminal, \cref{l:double-log-discr} implies that 
$(X,\S)$ is not log canonical, that is, $\lct(X,\S) < 1$.
Then there is a connected component $\S'$ of $\S$ such that $\lct(X,\S') < 1$, 
hence $l(\O_{\S'})$ satisfies the claimed bound by \cref{l:sharp-bound-length}. 
Since $l(\O_\S) \ge l(\O_{\S'})$, this proves the proposition.
\end{proof}

\subsection{Mather log dicrepancies}

A different way of measuring singularities 
of pairs on singular varieties is to use the Jacobian ideal sheaf $\Jac_f = \Fitt^0(\Om_{Y/X})$
of a log resolution $f \colon Y \to X$ in place of the relative canonical divisor. 
We define the \emph{Mather log discrepancy} of $E$ over a pair $(X,Z)$ to be 
\[
\^a_E(X,Z) := \ord_E(\Jac_f) + 1 - \ord_E(Z).
\]
If $X$ is smooth then $\^a_E(X,Z) = a_E(X,Z)$, but we have a strict inequality $\^a_E(X,Z) > a_E(X,Z)$
as soon as $X$ is singular at the generic point of the center of $E$. 
This follows from \cite[Proposition~3.4]{dFD14}, 
which shows that $\ord_E(\Jac_X) = \ord_E(K_{Y/X}) + \frac 1r \ord_E(\fn_{r,X})$
where $\Jac_X = \Fitt^{\dim X}(\Om_X)$ is the Jacobian ideal sheaf of $X$, 
$r$ is any positive integer such that $rK_X$ is Cartier, and
$\fn_{r,X}$ is the ideal sheaf defined by the image of the natural map
$(\wedge^{\dim X}\Om_X)^{\otimes r} \to \O_X(rK_X)$, 
and Nobile's theorem \cite[Theorem~2]{Nob75}, which implies that 
$\fn_{r,X}$ vanishes precisely on the singular locus of $X$. 

\begin{example}
If $X$ has locally complete intersection singularities, then 
$\ord_E(\Jac_f) = \ord_E(K_{Y/X}) + \ord_E(\Jac_X)$ (see \cite[Proposition~1]{Pie79}), hence
\[
\^a_E(X,Z) = a_E(X,Z) + \ord_E(\Jac_X).
\]
\end{example}

Let now $X \subset \A^N$ be a Cohen--Macaulay variety of dimension $n$,
and $E$ a divisor over $X$.
Consider general linear projections
\[
X \subset \A^N \xrightarrow{\ff} U=\A^n \xrightarrow{\g} V = \A^m,
\]
where $1 \le m \le n$.
We denote by $\f \colon X \to V$ be the composition. 
Write $\val_E|_{k(U)} = p \val_F$ and $\val_E|_{k(V)} = q \val_G$ where
$F$ and $G$ are divisors over $U$ and $V$, respectively,  
and $p$ and $q$ are positive integers (e.g., see \cite[Lemma~2.3]{dFM15}). 
Let $Z \subset X$ be a closed subscheme of codimension $r = n-m+1$ cut out by a regular sequence.
We assume that $\f|_Z$ is a proper finite morphism.
Note that $\f_*[Z]$ is a cycle of codimension 1 in $V$.
We regard $\f_*[Z]$ as a Cartier divisor on $V$. 

\begin{theorem}
\label{t:projection}
With the above notation, for every $c \ge 0$ such that $\^a_E(X,cZ)\geq 0$ we have
\[
q\, a_G(V,\tfrac {c^r}{r^r}\.\f_*[Z])  \le \^a_E(X,cZ).
\]
\end{theorem}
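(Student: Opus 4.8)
The plan is to use the given factorization $\f=\g\circ\ff$, with $\ff\colon X\to U=\A^n$ finite (flat, by the Cohen--Macaulay hypothesis and miracle flatness) and $\g\colon U\to V=\A^m$ a general linear projection with $(r-1)$-dimensional affine fibers, and to treat the two maps in turn: first push the Mather log discrepancy of $(X,cZ)$ along $E$ down to an \emph{ordinary} log discrepancy over the \emph{smooth} space $U$ using the result of \cite{dFM15} on Mather log discrepancies under general projections, and then run the projection method of \cite{dFEM03}, now entirely on smooth varieties, to reach $G$ over $V$ and the pushforward divisor $\f_*[Z]$. Write $\val_E|_{k(U)}=p\,\val_F$ and $\val_F|_{k(V)}=p'\,\val_G$, so that $q=pp'$ (the case $r=1$, where $\f_*[Z]$ is just the norm of a defining equation of $Z$, being immediate).

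I would first record the ``$c=0$'' comparison of discrepancies. The finiteness and generality of $\ff$, together with \cite{dFM15}, give $\^a_E(X)\ge p\,a_F(U)$; here it is essential to use the Mather log discrepancy rather than the ordinary one, since for singular $X$ the relative canonical divisor of a resolution does not see the ramification of $\ff$ correctly --- this gap between $\ord_E(\Jac)$ and $\ord_E(K_{Y/X})$ is exactly what the ideal $\fn_{r,X}$ in the formula recalled above measures. For the fibration $\g$ one has $K_{U/V}=0$, and a Hurwitz-type estimate along $F$ gives $a_F(U)\ge p'\,a_G(V)$; combining the two, $\Delta:=\^a_E(X)-q\,a_G(V)\ge0$, with strict inequality when $r\ge2$.

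The heart of the argument, and the step I expect to be hardest, is the singular analogue of the projection proposition of \cite{dFEM03}: a lower bound for $q\,\val_G(\f_*[Z])=\val_E(\f^*g)$, where $g$ defines the divisor $\f_*[Z]$, in terms of $\val_E(\I_Z)$ and $\Delta$. Since $Z$ is cut out by a regular sequence and $\f|_Z$ is finite onto a hypersurface, $\f_*[Z]$ is obtained by eliminating the $r-1$ fiber coordinates of $\f$ from the $r$-fold complete intersection $Z$, and the multiplicity of $\f_*[Z]$ at a point of its support lying over the center of $G$ is bounded below by a multiplicity-versus-log-canonical-threshold inequality in the fiber dimension $r-1$ (in the spirit of \cref{t:mult-thresholds}(2)); this is what produces the normalization $r^r$, while Bézout (\cref{p:Bezout1}, \cref{eg:ci-mult}) and the semicontinuity of multiplicities keep the relevant quantities finite and comparable. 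The resulting estimate has the shape $q\,\val_G(\f_*[Z])\cdot\Delta^{\,r-1}\ge(r-1)^{r-1}\,\val_E(\I_Z)^{\,r}$, which is sharp, as $Z=V(x_1^d,\dots,x_r^d)$ with $E$ the blow-up of a point shows. Combining it with the elementary inequality $c\,a-\tfrac{c^r}{r^r}b\le(r-1)(a^r/b)^{1/(r-1)}$, valid for $a,c\ge0$ and $b>0$, gives
\[
c\,\val_E(\I_Z)-\tfrac{c^r}{r^r}q\,\val_G(\f_*[Z])\le\Delta,
\]
and substituting $\Delta=\^a_E(X)-q\,a_G(V)$ and rearranging is exactly the asserted inequality $q\,a_G\bigl(V,\tfrac{c^r}{r^r}\f_*[Z]\bigr)\le\^a_E(X,cZ)$. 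The hypothesis $\^a_E(X,cZ)\ge0$ confines $c$ to the range in which this chain of estimates applies; there the two sides are tangent at the value of $c$ for which $\^a_E(X,cZ)=0$.

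Thus the only genuine obstacle is the elimination estimate of the third paragraph: extracting the \emph{sharp} constant from the passage between the codimension-$r$ complete intersection $Z$ on the possibly singular $X$ and the image divisor $\f_*[Z]$ on $V$, uniformly over all divisors $E$. This is where the Cohen--Macaulay hypothesis (so that multiplicities along $Z$ and of its pushforward are governed by Bézout, via \cref{eg:ci-mult} and \cref{p:Bezout1}) and the Mather-discrepancy descent of \cite{dFM15} are indispensable; the remaining steps are the routine bookkeeping of log discrepancies along the tower $X\to U\to V$ and the convexity computation above.
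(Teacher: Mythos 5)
The paper does not actually prove \cref{t:projection}: the statement is quoted from the literature, with the smooth case attributed to \cite{dFEM03} and the extension to singular $X$ via Mather log discrepancies to \cite{dFM15}. Your proposal reconstructs the architecture of those references in broad outline, and the parts you carry out are correct: the elementary inequality $ca-\tfrac{c^r}{r^r}b\le (r-1)(a^r/b)^{1/(r-1)}$ is exactly the maximum over $c\ge 0$, the rearrangement at the end does recover the asserted inequality, and your sharpness check with $Z=V(x_1^d,\dots,x_r^d)$ and $E$ the exceptional divisor over a point is accurate (there $\Delta=r-1$, $\val_E(\I_Z)=d$, $q\val_G(\f_*[Z])=d^r$, and your key estimate holds with equality).

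The gap is that the ``elimination estimate'' $q\,\val_G(\f_*[Z])\cdot\Delta^{r-1}\ge(r-1)^{r-1}\val_E(\I_Z)^r$, which you correctly identify as the heart of the matter, is the entire content of the theorem and is only asserted. Moreover, it is not the primitive statement one can prove directly: the known arguments introduce an auxiliary quantity $e$, the order of $E$ along the fiber of $\f$ through its center, and establish two separate inequalities --- a relative-discrepancy bound of the shape $\^a_E(X)\ge q\,a_G(V)+(r-1)e$ and a multiplicity bound of the shape $q\,\val_G(\f_*[Z])\ge \val_E(\I_Z)^r/e^{r-1}$ --- which are then combined by AM--GM; your single inequality is a consequence of these two, but there is no independent route to it that bypasses that analysis. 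A second, related problem is the proposed two-step factorization: after pushing down to $U$ via the Mather-discrepancy comparison $\^a_E(X)\ge p\,a_F(U)$, you cannot simply ``run the projection method of \cite{dFEM03} on $U\to V$,'' because $Z$ does not descend to a subscheme of $U$ cut out by a regular sequence --- the cycle $\ff_*[Z]$ loses exactly the complete-intersection structure on which the multiplicity-versus-length estimate depends. The references instead treat the composite $\f\colon X\to V$ directly, pushing $[Z]$ all the way to $V$ and confining the Mather discrepancy to the relative-discrepancy inequality. So what you have is a plausible and well-organized plan whose decisive step remains exactly the theorem to be proved.
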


This theorem was first proved in \cite{dFEM03} assuming that $X$ is smooth.
The proof was then extended to singular varieties using Mather log discrepancies in \cite{dFM15}.

\section{Birational rigidity}
\label{s:smooth}

A \emph{Mori fiber space} is a normal projective variety $X$ with $\Q$-factorial terminal 
singularities, equipped with an extremal Mori contraction $f \colon X \to S$ of fiber type.  
This means that $f$ is a proper morphism with connected fibers and relative Picard number 1, 
the anticanonical class $-K_X$ is $f$-ample, and $\dim S < \dim X$. 
Note that Fano varieties with $\Q$-factorial terminal singularities 
and Picard number 1 can be viewed as Mori fiber spaces over a point.

A Mori fiber space $f\colon X \to S$ is \emph{birationally superrigid}
if for every birational map $\f \colon X \rat X'$ from $X$ to another Mori fiber space $f' \colon X' \to S'$
there exists a birational map $\ff \colon S \rat S'$ such that $f'\o \f = \ff \o f$
and, furthermore, $\f$ induces an isomorphism between the generic fibers of $f$ and $f'$. 

Birational superrigidity is a notion that is naturally motivated from the point of view of the minimal
model program. Clearly, birational superrigid varieties are nonrational. 
In dimension 2, every rationally connected Mori fiber space is rational and hence 
cannot be birationally superrigid, 
something that should be contrasted with the fact that two-dimensional minimal models are unique
in their birational classes. Both facts are no longer true in higher dimensions.
The first example of birationally superrigid rationally connected Mori fiber space 
was discovered as a byproduct 
of Iskovskikh and Manin's proof of nonrationality of smooth quartic threefolds in $\P^4$. 

Restricting to a Fano variety $X$ with $\Q$-factorial terminal singularities 
and Picard number 1, we see that $X$ is birationally superrigid if and only if every 
birational map $\f \colon X \rat X'$ to a Mori fiber space is an isomorphism. 
The following result, established in \cite{IM71,Cor00}, is at the center of the study of birationally superrigid 
Fano varieties. 
A more general version can be stated where $X$ is any Mori fiber space, 
but we will not need it.

\begin{theorem}[Noether--Fano Inequality]
\label{t:NoetherFano}
Let $X$ be a Fano variety with $\Q$-factorial terminal singularities and Picard number 1, and let
$\f \colon X \rat X'$ be a birational map to a Mori fiber space $f' \colon X' \to S'$. 
Fix a projective embedding of $X'$ given by a linear system $\cH' \subset |-r'K_{X'} + A'|$
where $r'$ is a positive rational number and $A'$ is the pullback of an ample divisor on $S'$
(we set $A' = 0$ if $S'$ is a point), so that $\f$ is defined by the movable linear
system $\cH := \f_*^{-1}\cH'$. Let $r$ the positive rational number such that 
$\cH \subset |-rK_X|$. 
If $\ct(X,\cH) \le \frac 1r$, then $\f$ is an isomorphism.
\end{theorem}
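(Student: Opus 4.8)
The statement is the canonical-threshold form of the Noether--Fano inequality: we must show that if $\ct(X,\cH)\ge\tfrac 1r$, i.e.\ if the pair $(X,\tfrac 1r\cH)$ is canonical, then $\f$ is an isomorphism. The plan is to resolve $\f$ by a single smooth projective model and to compare the discrepancies of the pairs $(X,\tfrac 1r\cH)$ and $(X',\tfrac 1{r'}\cH')$. Two facts drive the comparison: on the $X$--side, $K_X+\tfrac 1r\cH\equiv 0$ because $\cH\subset|-rK_X|$; on the $X'$--side, $\cH'$ is base-point free (it defines a projective embedding of $X'$) and $X'$ has terminal singularities, so that $(X',\tfrac 1{r'}\cH')$ is automatically terminal and $K_{X'}+\tfrac 1{r'}\cH'$ equals the pullback of an ample class from $S'$, hence is $f'$--numerically trivial and pseudoeffective.

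First I would choose a smooth projective $Y$ with birational morphisms $p\colon Y\to X$ and $q\colon Y\to X'$ resolving $\f$, and set $\cM:=p_*^{-1}\cH=q_*^{-1}\cH'$. Canonicity of $(X,\tfrac 1r\cH)$ together with $K_X+\tfrac 1r\cH\equiv 0$ gives $K_Y+\tfrac 1r\cM\equiv E$ with $E\ge 0$ and $p$--exceptional; pushing forward by $q$ (so that $q_*\cM=\cH'$) gives that $K_{X'}+\tfrac 1r\cH'\equiv q_*E\ge 0$ is effective, which upon substituting $\cH'\equiv-r'K_{X'}+A'$ reads $(1-\tfrac{r'}r)K_{X'}+\tfrac 1rA'\equiv q_*E\ge 0$. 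Intersecting with a curve in a general fibre of $f'$---on which $-K_{X'}$ is positive, $A'$ is trivial and the effective class $q_*E$ is nonnegative (if $S'$ is a point one argues directly from $K_{X'}$ being anti-ample)---forces $r\le r'$. Symmetrically, terminality of $X'$ and base-point freeness of $\cH'$ give $K_Y+\tfrac 1{r'}\cM\equiv\tfrac 1{r'}(\text{pullback of an ample class from }S')+K_{Y/X'}$; pushing forward by $p$ (using $p_*\cM=\cH\equiv-rK_X$) makes $(1-\tfrac r{r'})K_X$ pseudoeffective on $X$, which since $-K_X$ is ample and $\rho(X)=1$ forces $r\ge r'$. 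Hence $r=r'$.

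With $r=r'$ the two relations above become equalities, so the effective and pseudoeffective classes that appeared are numerically trivial and therefore zero on the Picard-number-one varieties $X$ and $X'$. The vanishing of the $S'$--contribution forces $\dim S'=0$: otherwise the strict transform on $Y$ of the $f'$--preimage of a general hyperplane section of $S'$ would be $p$--exceptional, contradicting that this divisor meets the locus where $\f^{-1}$ is an isomorphism and so is not contracted by $\f^{-1}$. Thus $S'$ is a point, $X'$ is Fano of Picard number one, and $A'=0$. Bookkeeping the coefficient of an arbitrary divisor over $Y$ in $K_{Y/X}-K_{Y/X'}$---via the identity $p^*H-q^*H'=\sum_E m_EE$ for compatible general members $H\in\cH$ and $H'=\f_*H\in\cH'$, with $m_E=\ord_E(\cH)$ and $q^*H'=q_*^{-1}H'$ by base-point freeness---shows both that $\f$ contracts no divisor and that $a_F(X,\tfrac 1r\cH)=a_F(X',\tfrac 1r\cH')$ for every divisor $F$ over $Y$, i.e.\ that $(X,\tfrac1r\cH)$ and $(X',\tfrac1r\cH')$ are crepant birational. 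A further argument, exploiting that $X'$ is Fano of Picard number one with $(X',\tfrac1r\cH')$ terminal, rules out contracted divisors for $\f^{-1}$ as well; then $\f$ is an isomorphism in codimension one between $\Q$--factorial varieties of Picard number one, hence an isomorphism.

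The main obstacle is the mismatch between the natural coefficients $\tfrac 1r$ on the $X$--side and $\tfrac 1{r'}$ on the $X'$--side: the discrepancy comparison only closes after $r=r'$ has been established, and for that one must draw one inequality from each side---canonicity of $(X,\tfrac 1r\cH)$ on one, terminality of $X'$ together with base-point freeness of $\cH'$ on the other. Once $r=r'$, the remaining work is the divisor-by-divisor bookkeeping on $Y$ that promotes numerical crepancy to crepancy and then forbids contracted divisors in both directions; this is elementary but is the step that most needs to be carried out with care.
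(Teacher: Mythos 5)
The paper does not prove this theorem; it cites \cite{IM71,Cor00}, and your argument is the standard Corti-style proof those references give (note you silently prove the version with $\ct(X,\cH)\ge\frac1r$, which is evidently what is meant: the printed $\le$ is inconsistent with how the theorem is used in the proof of Theorem~A). Your first half is correct: the two expressions for $K_Y+c\cM$ on a common resolution $p\colon Y\to X$, $q\colon Y\to X'$, pushforward, intersection with curves in fibers of $f'$, and $\rho(X)=1$ do give $r=r'$. The deduction that $S'$ is a point is also essentially right, but be careful: at that stage $\rho(X')=\rho(S')+1$ need not be $1$, so the classes are not "numerically trivial on $X'$"; the vanishing must be extracted from the $X$-side relation $(1-\frac r{r'})K_X\equiv p_*(\frac1{r'}q^*A')+p_*K_{Y/X'}$, where $\rho(X)=1$ forces each (pseudo)effective summand to vanish. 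That same vanishing, via terminality of $X'$, is what gives $p_*K_{Y/X'}=0$, i.e.\ every $q$-exceptional divisor is $p$-exceptional, hence $\f$ contracts no divisor.

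The genuine gap is in your endgame. First, the bookkeeping you propose is circular as ordered: $p^*H-q^*H'=\sum m_EE$ is an equality of divisors, but its comparison with $K_{Y/X}-K_{Y/X'}$ is only a $\Q$-linear equivalence, and promoting that to an equality of divisors (which is what coefficient-by-coefficient bookkeeping requires) needs the difference to be $p$-exceptional, i.e.\ needs "every $q$-exceptional divisor is $p$-exceptional" already in hand; the non-circular route is the one just indicated through $p_*K_{Y/X'}=0$. Second, and more seriously, the exclusion of divisors contracted by $\f^{-1}$ is not supplied and does not follow from the crepancy relation $E=E'$: for a divisor $F$ that is $p$-exceptional but not $q$-exceptional, that relation only yields $a_F(X,\tfrac1r\cH)=1$, which canonicity of $(X,\tfrac1r\cH)$ permits, and terminality of $(X',\tfrac1r\cH')$ says nothing about such an $F$ since $F$ is a divisor on $X'$. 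The standard way to close this is the $\Q$-factoriality count $\rho(Y)=\rho(X)+\#\{p\text{-exceptional primes}\}=\rho(X')+\#\{q\text{-exceptional primes}\}$: since $\rho(X)=\rho(X')=1$ and the $q$-exceptional primes form a subset of the $p$-exceptional ones, the two sets coincide, so $\f$ is an isomorphism in codimension one; then, as you say, matching the ample linear systems $\cH$ and $\cH'$ (equivalently the anticanonical rings) gives the isomorphism. Without an argument of this kind the final step of your proof is unsupported.
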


Before addressing \cref{t:ODP}, we quickly revisit the proof of the smooth case.
We follow the proof given in \cite{Zhu20,Kol19}.

\begin{proof}[Proof of \cref{t:smooth}]
Let $X \subset \P^{n+1}$ be a smooth hypersurface of degree $n+1$, with $n \ge 3$. 
Note that $-K_X \sim \O_X(1)$, and this generates 
the Picard group of $X$ by the Lefschetz hyperplane theorem.

We assume by contradiction that $X$ is not birationally superrigid, 
and let $\f$ be a birational map 
to a Mori fiber space $X'$ that is not an isomorphism. 
Let $\cH \subset |-rK_X|$ be a linear system defining $\f$ as in \cref{t:NoetherFano}. 
By the Noether--Fano inequality, we have
\[
c:= \ct(X,\cH) < \frac 1r.
\] 
On the other hand, \cref{p:Puk} implies that 
$e_C(D) \le r$ for every $D \in \cH$ and every irreducible 
curve $C \subset X$, hence it follows by \cref{t:mult-thresholds} that
$(X,\frac 1r \cH)$ is canonical in dimension 1 (i.e., away from a finite set).
Therefore there is a closed point $p \in X$ such that 
\[
\mld_p(X,c\cH) = 1. 
\]
Fix now two general elements $D,D' \in \cH$, and let $Z = D \cap D' \subset X$
their schematic intersection. For any subvariety $V \subset X$, we will denote by $Z|_V$
the intersection $Z \cap V$. 
Note that we still have $\mld_p(X,cZ) = 1$, and
\cref{p:Puk} implies that the set $\{x \in X \mid e_x(Z) > r^2 \}$
is at most one-dimensional.
Let $Y \subset X$ be a general hyperplane section through $p$. 
Bertini's theorem ensures that the pair $(Y,cZ|_Y)$ is canonical away from $p$, and
inversion of adjunction (\cref{t:inv-of-adj}) implies that
\[
\mld_p(Y,cZ|_Y) \le 0.
\]
Note that, by \cref{p:Bertini}, the set $\dim\{y \in Y \mid e_y(Z|_Y) > r^2 \}$ is zero-dimensional.

If $n = 3$, then $Y$ is a surface and $Z|_Y$ is zero dimensional. 
Since $\lct_p(Y,Z|_Y) < \frac 1r$, we have
$e_{Z|_Y}(Y)_p > 4r^2$ by~\cref{t:mult-thresholds}. 
On the other hand, observing that $Y$ has degree 4 and $Z|_Y$ is a zero dimensional complete intersection scheme
cut out on $Y$ by two equations of degree $r$, we also have 
$e_{Z|_Y}(Y)_p \le 4r^2$ by \cref{p:Bezout1}. This gives a contradiction.

Assume therefore that $n \ge 4$. 

We clam that the pair $(Y,2cZ|_Y)$ is log terminal in dimension 1.
To see this, first recall that the set
$\{y \in Y \mid e_y(Z|_Y) > r^2 \}$ is zero-dimensional, hence there is a 
finite set $\G \subset Y$ such that 
$e_q(Z|_Y) \le r^2$ for all $q \in Y \setminus \G$. 
For any such $q$, let $S \subset Y$ be a smooth surface cut out by general hyperplanes through $q$.
We have $e_q(Z|_S) \le r^2$, 
and since $Z|_S$ is a zero-dimensional complete intersection subscheme of $S$, we have
\[
\lct_q(S,Z|_S) \ge \frac{2}{\sqrt{e_q(Z|_S)}} \ge \frac 2r > 2c
\]
by \cref{t:mult-thresholds} (cf.\ \cref{eg:e_pZ=e_ZX_p}).
It follows that $(S,2cZ|_S)$ is log terminal near $q$. By inversion of adjunction, 
this implies that $(Y,2cZ|_Y)$ is log terminal near $q$. 
This proves that $(Y,2cZ|_Y)$ is log terminal away from a finite set $\G$, as claimed. 

Therefore the multiplier ideal $\J(Y,2cZ|_Y)$
defines a zero-dimensional subscheme $\S \subset Y$. 
Since $Z|_Y$ is cut out by forms of degree $r$ and $2cr < 2$, we have 
$H^1(Y,\O_Y(2) \otimes \J(Y,2cZ|_Y)) = 0$ 
by Nadel's vanishing theorem (\cref{t:Nadel}).
This implies that there is a surjection
$H^0(Y,\O_Y(2)) \surj  H^0(\S,\O_\S)$.
Keeping in mind that $H^0(Y,\O_Y(2)) \cong H^0(\P^{n},\O_{\P^{n}}(2))$, we obtain
\[
h^0(\S,\O_\S) \le h^0(Y,\O_Y(2)) = \binom{n+2}{2}.
\]
On the other hand, \cref{p:colength-mult-ideal} gives the lower-bound
\[
l(\O_\S) \ge \tfrac 12 3^{n-1}.
\]
The two inequalities lead to a contradiction as soon as $n \ge 5$. 

With the case $n=3$ already settled, this leaves open only the case $n=4$. 
We treat this case using generic projections, as in \cite{dFEM03}. 
Note that the same argument can also be used to deal with the case $n=3$, 
instead of the argument based on Bezout's theorem we outlined earlier,
hence one can think of the proof of \cref{t:smooth} 
as splitting into two parts rather than three.

If $n=4$, then $Y$ is a threefold in $\P^4$ and $Z|_Y$ is one dimensional. 
Let $f \colon Y \rat \P^2$ be the map induced by 
a general linear projection $\P^4 \rat \P^2$. 
We may assume that the indeterminacies of $f$ are disjoint from the support of $Z|_Y$, hence
we can define the $\Q$-divisor
\[
\D = \frac{c^2}{4}\. f_*[Z|_Y].
\]
Since the set $\{y \in Y \mid e_y(Z|_Y) > r^2 \}$ is zero dimensional and, for a general projection, 
$f$ restricts to a birational morphism on the support of $Z|_Y$, it
follows that the pair $(\P^2,\D)$ is log terminal in dimension one. By contrast, 
\cref{t:projection} implies that the pair is not log terminal at $f(p)$. 
Therefore, $\J(\P^2,\D)$ defines a zero dimensional scheme $W \subset \P^2$.
Note that $\deg(\D) < 2$. We have $H^1(\P^2,\O(-1) \otimes \J(\P^2,\D)) = 0$ by Nadel's vanishing theorem, 
and this yields a surjection $H^0(\P^2,\O(-1)) \surj H^0(\O_W)$, 
which is impossible since $H^0(\O_W) \ne \{0\}$.
\end{proof}

\begin{proof}[Proof of \cref{t:ODP}]
Let $X \subset \P^{n+1}$ be a hypersurface
of degree $n+1$ with only isolated ordinary double points as singularities, and assume that $n \ge 5$. 
Note that $X$ is a Fano variety with terminal $\Q$-factorial singularities 
and $K_X$ generates the Picard group. 
It is easy to see that ordinary double points in dimension $\ge 3$
are terminal and locally factorial in the analytic topology, 
but $\Q$-factoriality is a local property in the Zariski topology, so it needs to be
verified. This is done, for instance, in \cite[Lemma~5.1]{dF17}. 

The starting point of the proof of \cref{t:ODP} is the same as in the smooth case just discussed.
Assuming $X$ is not birationally superrigid, we construct a
movable linear system $\cH \subset |-rK_X|$ such that
\[
c:= \ct(X,\cH) < \frac 1r.
\] 
Let $E$ be an exceptional divisor over $X$ such that $a_E(X,c\cH) = 1$, and 
let $T \subset X$ be the center of $E$. 
We fix a general point $p \in T$.
It follows by \cref{p:Puk} and \cref{t:mult-thresholds} (see also \cref{r:Puk}) that
$T$ is at most one-dimensional, and it is zero-dimensional 
if it is contained in the smooth locus of $X$. 
If $T \subset X_\reg$, then we can argue as in the proof of \cref{t:smooth} to get a contradiction.
We can therefore assume that $T \not\subset X_\reg$. 
 
Let $Z = D \cap D'$ be the intersection of two general members $D,D' \in \cH$ and
$Y = V \cap V' \subset X$ 
the intersection of two general hyperplane sections $V,V' \subset X$ through $p$. 
After cutting down with one hyperplane, we get a pair $(V,cZ|_V)$ that is canonical in dimension 1. 
After cutting down with the second hyperplane, 
we get a pair $(Y,cZ|_Y)$ that is not log canonical at $p$; here we apply inversion of adjunction.
Furthermore, by \cref{p:Bertini,p:Puk} (see also \cref{r:Puk})
we can ensure that the set $\{y \in Y \mid e_y(Z|_Y) > r^2 \}$ is zero-dimensional.

We split the proof in two cases, depending on the dimension of $X$.

We first treat the case where $n \ge 8$. 
By the same argument as in the proof of \cref{t:smooth}, we see that
$\J(Y,2cZ|_Y)$ defines a zero dimensional scheme $\S \subset Y$.
Since $Z|_Y$ is cut out on $Y$ by forms of degree $r$ and $2cr < 2$, we have 
$H^1(Y,\O_Y(3) \otimes \J(Y,2cZ|_Y)) = 0$ by Nadel vanishing, hence
\begin{equation}
\label{eq:upperbound}
h^0(\O_\S) \le h^0(Y,\O_Y(3)) = h^0(\P^{n-1},\O(3)) = \binom{n+2}{3}.
\end{equation}

By \cref{l:double-log-discr}, we have $\lct(Y,\S) \le 1$.
If $p$ is a smooth point, then we can apply~\cref{l:sharp-bound-length}
to get a lower bound. However, if $p$ is a singular point then we cannot apply the bound directly.
Instead, we will take a suitable degeneration which will allow
us to apply~\cref{l:sharp-bound-length} in lower dimension. 
 
Let us discuss here the case where $p$ is a singular point, 
the other case being easier and leading to a stronger bound. 
We restrict to affine chart $\A^{n-1}$ containing $p$, 
and fix coordinates $(x_1,\dots,x_{n-1})$ centered at $p$
such that the tangent cone of $Y$ at $p$ is defined by $\sum x_i^2 = 0 $. 
For simplicity, we still denote by $Y$ its restriction to $\A^{n-1}$ and let $\S$ denote now the connected
component of the zero dimensional scheme defined by $\J(Y,2cZ|_Y)$ that is supported at $p$. 

We take a flat degeneration of $Y \subset \A^{n-1}$ to the union $H_1 \cup H_2$ of two
hyperplanes through $p$, and let $\S' \subset \A^{n-1}$ be the zero dimensional 
scheme supported at $p$ obtained by flat degeneration from $\S$. Note that $\S' \subset H_1 \cup H_2$. 
Concretely, if $f(x_1,\dots,x_{n-1}) = 0$ is the equation of $Y$ in $\A^{n-1}$, then 
degeneration is constructed using the ${\mathbb G}_m$-action
on $\A^{n-1}$ given by $x_i \mapsto \lambda x_i$ for $i=1,2$
and $x_j \mapsto \lambda^2 x_j$ for $j > 2$, and sending $\lambda \to 0$. 

Recall that $\mld_p(Y,\S) \le 0$. 
By adjunction, this implies that 
\[
\mld_p(\A^{n-1},\S + Y) \le 0.
\]
By semicontinuity of log canonical thresholds, we have
\[
\mld_p(\A^{n-1},\S' + H_1 + H_2) \le 0.
\]  
By inversion of adjunction, this implies that
\[
\mld_p(H_1,\S'_1 + H_{12}) \le 0
\]
where $\S'_1 = \S' \cap H_1$ and $H_{12} = H_1 \cap H_2$. 
Applying inversion of adjunction again, we get
\[
\mld_p(H_{12},\S'_{12}) \le 0
\]
where $\S'_{12} = \S'_1 \cap H_{12} = \S' \cap H_{12}$. 
Note that $H_{12} = \A^{n-3}$. 
Then~\cref{l:sharp-bound-length} implies that
\[
l(\O_{\S'_{12}}) \ge \tfrac 12 3^{n-3}.
\]
Since $l(\O_\S) = l(\O_{\S'})$ by flatness and
$\l(\O_{\S'}) \ge l(\O_{\S'_{12}})$ from the inclusion $\S'_{12} \subset \S'$, 
we conclude that
\begin{equation}
\label{eq:lowerbound}
l(\O_\S) \ge \tfrac 12 3^{n-3}.
\end{equation}
Comparing the two inequalities \eqref{eq:upperbound}
and \eqref{eq:lowerbound}, we get a contradiction as soon as $n \ge 8$. 
So the theorem is proved in this case. 

We now address the remaining cases $5 \le n \le 7$.
Suppose first that $\dim T = 1$. 
Recall that we have fixed a general point $p \in T$ 
and let $Y := V \cap V' \subset X$ be
the intersection of two general hyperplane sections through $p$. 
Let $f \colon Y \rat \P^{n-3}$ be the map induced by 
a general linear projection $\P^{n-1} \rat \P^{n-3}$, and let
\[
\D = \frac{c^2}{4}\. \p_*[Z|_Y].
\]
This is an effective $\Q$-divisor of degree less than $(n+1)/4$.  
Since $\deg(\D) < 2$, Nadel's vanishing theorem gives
\[
H^1(\P^{n-3},\O(4-n) \otimes \J(\P^{n-3},\D)) = 0.
\]
\cref{t:projection} implies that the pair $(\P^{n-3},\D)$ is not log terminal at $f(p)$, and
since the set $\{y \in Y \mid e_y(Z|_Y) > r^2 \}$ is zero dimensional, we can 
argue as in the proof of \cite[Theorem~4.1]{dFEM03} that
if $n \in \{5,6\}$ then $\J(\P^{n-3},\D)$ defines a zero dimensional scheme $W \subset \P^{n-3}$,
hence the surjection
\[
H^0(\P^{n-3},\O(4-n)) \surj H^0(\O_W)
\]
forces $n \le 4$. 
If $n=7$, then one can only conclude that $\J(\P^4,\D)$ defines a scheme of dimension
at most 1, but after cutting down one more time and using the vanishing
$H^1(\P^3,\O(-2) \otimes \J(\P^3,\D|_{\P^3})) = 0$
we get to a similar contradiction. 

Therefore $T$ must be zero dimensional, i.e., $T = \{p\}$.
With the case $p \in X_\reg$ already been settled by the proof of \cref{t:smooth}, 
we assume that $p$ is a singular point. 
After cutting down with just one hyperplane section $V \subset X$ through $p$, 
we get $\mld_p(V,cZ|_V) \le 0$ by inversion of adjunction, hence 
we can find a divisor $E$ over $V$ with center $p$ such that
$a_E(V,cZ|_V) \le 0$. Note that $\Jac_V\.\O_{V,p} = \fm_{V,p}$ since $p$ is an ordinary double point, 
hence 
\[
\^a_E(V,cZ|_V) \le \val_E(\fm_{V,p}). 
\]
We take a general linear projection $g \colon V \to \P^{n-2}$ and let
\[
\Theta = \frac{c^2}{4}\. \p_*[Z|_V].
\]
By \cref{t:projection}, there exists a divisor $G$ over $\P^{n-2}$ with center $q = g(p)$ such that, 
\[
a_G(\P^{n-2},\Theta) \le \val_G(\fm_q).
\]
Restricting to a general hyperplane $\P^{n-3}$ through $q$
and letting $\D$ be the restriction of $\Theta$, we get
a pair $(\P^{n-3},\D)$ which satisfies the same properties
as in the situation we discussed in the case $\dim T = 1$. 
We can therefore repeat the same argument to get a contradiction.

This completes the proof of \cref{t:ODP}. 
\end{proof}

\begin{remark}
\label{r:rank>1}
The first part of the proof of \cref{t:ODP} 
works for more general quadratic singularities, with
the same argument showing that, for $n \ge 8$, 
every hypersurface of degree $n+1$ in $\P^{n+1}$
with isolated quadratic singularities of rank $\ge 2$ is birationally superrigid. 
Note, however, that the part of the proof of \cref{t:ODP} dealing with the case 
$5 \le n \le 7$ does not extend automatically,
as the Jacobian ideal at a quadratic singularity of submaximal rank is
strictly contained in the maximal ideal and can have larger order along some valuations, 
invalidating the last step of the proof. 
\end{remark}


\section{K-stability}

K-stability is an algebro-geometric notion related to the existence of
K\"ahler--Einstein metrics on Fano varieties. 
The original definition is given in terms of 
the positivity of the generalized Futaki invariant of test configurations. 
We will recall here an equivalent definition 
following the valuative approach of \cite{Fuj19,Li17}. 
According to the latter approach,
an $n$-dimensional normal $\Q$-Gorenstein projective variety $X$ with ample anticanonical class 
is \emph{K-stable} if for every divisor $E$ over $X$
\[
a_E(X) > \frac 1{(-K_X)^n} \int_0^\infty \vol(f^*(-K_X)-tE)\,dt,
\]
where $f \colon Y \to X$ is a resolution on which $E$ appears as a divisor. 
This definition better fits the theme of this paper. 
The equivalence between this definition and the one via 
the generalized Futaki invariant is proved in \cite{BX19}.
We refer to \cite{Xu20} for a general introduction to the subject. 

We use the following result, due to \cite{SZ19}.
A stronger result along the same lines is proved in \cite{Zhu20}, but we will not need it.

\begin{theorem}
\label{t:SZ}
Let $X$ be a Fano variety with $\Q$-factorial terminal singularities and Picard number 1. 
Assume that $X$ is birationally superrigid and $\lct(X,D) > \frac 12$
for every effective $\Q$-divisor $D \sim_\Q -K_X$.
Then $X$ is K-stable. 
\end{theorem}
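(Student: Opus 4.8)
The plan is to verify the valuative criterion for K-stability recalled above: it suffices to prove that $a_E(X) > S_X(E) := \tfrac{1}{(-K_X)^n}\int_0^\infty \vol(f^*(-K_X)-tE)\,dt$ for every divisor $E$ over $X$, where $f\colon Y\to X$ is a resolution on which $E$ lives. I would argue by contradiction, fixing a divisor $E$ with $a_E(X)\le S_X(E)$. If $E$ is a prime divisor on $X$ itself, then $\rho(X)=1$ gives $E\sim_\Q\lambda(-K_X)$ with $\lambda>0$, and the hypothesis applied to the effective divisor $\tfrac1\lambda E\sim_\Q -K_X$ forces $\lambda>\tfrac12$; a direct computation then yields $S_X(E)=\tfrac{1}{\lambda(n+1)}<1=a_E(X)$, a contradiction. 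So we may assume $E$ is exceptional over $X$, and since $X$ is terminal we have $A:=a_E(X)>1$. The goal is to bound $S_X(E)$ strictly above by $A$.

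For rational $t\ge 0$ and $m\gg 0$ divisible, let $W_{m,t}\subset|-mK_X|$ be the sublinear system spanned by the sections of $-mK_X$ vanishing to order $\ge mt$ along $E$, and write $W_{m,t}=M_{m,t}+F_{m,t}$ with $F_{m,t}$ the fixed divisor and $M_{m,t}$ the movable part. Since $\rho(X)=1$ we have $F_{m,t}\sim_\Q b_{m,t}(-K_X)$ and $M_{m,t}\subset|r_{m,t}(-K_X)|$ with $r_{m,t}+b_{m,t}=m$ and $b_{m,t}\ge 0$. Now I would invoke two bounds on the order along $E$. First, birational superrigidity is equivalent --- via the Noether--Fano inequality (\cref{t:NoetherFano}) and the Sarkisov program --- to the statement that $(X,\tfrac1r\cH)$ is canonical for every movable linear system $\cH\subset|-rK_X|$; applied to $M_{m,t}$ and tested at $E$ this gives $\ord_E(M_{m,t})\le r_{m,t}(A-1)$. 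Second, $\tfrac1{b_{m,t}}F_{m,t}\sim_\Q -K_X$ is effective, so the hypothesis $\lct(X,\tfrac1{b_{m,t}}F_{m,t})>\tfrac12$ gives $\ord_E(F_{m,t})<2b_{m,t}A$. Since every section of $W_{m,t}$ vanishes to order $\ge mt$ along $E$, we obtain
\[
mt\le \ord_E(M_{m,t})+\ord_E(F_{m,t})\le r_{m,t}(A-1)+2b_{m,t}A,
\]
so, writing $\theta_{m,t}:=b_{m,t}/m\in[0,1]$, we get $\theta_{m,t}\ge\tfrac{t-A+1}{A+1}$. Passing to the limit and setting $\beta(t):=\lim_m\theta_{m,t}$, the inclusion $M_{m,t}\subset|r_{m,t}(-K_X)|$ gives $\vol(f^*(-K_X)-tE)=\lim_m\tfrac{n!}{m^n}\dim M_{m,t}\le(1-\beta(t))^n(-K_X)^n$, hence
\[
\vol(f^*(-K_X)-tE)\le\Big(\tfrac{2A-t}{A+1}\Big)^n(-K_X)^n\quad\text{for }t\ge A-1,
\]
together with the trivial bound $\vol(f^*(-K_X)-tE)\le(-K_X)^n$ for all $t$; in particular the pseudoeffective threshold satisfies $\tau(E)\le 2A$.

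Integrating these two estimates gives $S_X(E)\le(A-1)+\int_{A-1}^{2A}\big(\tfrac{2A-t}{A+1}\big)^n\,dt=(A-1)+\tfrac{A+1}{n+1}$, so the assumption $A\le S_X(E)$ forces $A\ge n$. The remaining work, which I expect to be the main obstacle, is to promote this to a genuine contradiction: the elementary combination of the canonicity bound with the $\lct$ bound only constrains $A$ from below for an arbitrary destabilizer. To close the gap one should first reduce the destabilizing valuation to a \emph{special} one --- a Koll\'ar component over a closed point of $X$ --- so that its log discrepancy is controlled, and then sharpen the bound on $\vol(f^*(-K_X)-tE)$ in the intermediate range $0\le t\le A-1$, exactly where the argument above used only the trivial estimate, by tracking the Zariski-type decomposition of $f^*(-K_X)-tE$ over all of $[0,\tau(E)]$. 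The strict inequality $\lct>\tfrac12$ enters precisely here, through a doubling argument in the spirit of \cref{l:double-log-discr} together with the colength bounds of \cref{p:colength-mult-ideal} and \cref{l:sharp-bound-length}, which is what forces the resulting constant strictly below $A$ and completes the proof. The example of $\P^n$, for which the blow-up $E$ of a point has $a_E(X)=n=S_X(E)$, shows that the elementary estimate is sharp and that birational superrigidity of $X$ is indispensable.
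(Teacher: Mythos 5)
Your elementary estimate is carried out correctly as far as it goes: the case of a prime divisor $E \subset X$ is handled properly via $\rho(X)=1$ and the hypothesis $\lct(X,\tfrac1\lambda E)>\tfrac12$, and for exceptional $E$ the two bounds $\ord_E(M_{m,t})\le r_{m,t}(A-1)$ (from superrigidity, via the characterization of superrigidity by canonicity of movable linear systems, which, as you rightly note, is not literally \cref{t:NoetherFano} but needs the MMP/Sarkisov argument) and $\ord_E(F_{m,t})<2b_{m,t}A$ (from the $\lct$ hypothesis) do give $\vol(f^*(-K_X)-tE)\le\big(\tfrac{2A-t}{A+1}\big)^n(-K_X)^n$ for $t\ge A-1$ and hence the bound you state on the integral. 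But this proves the valuative inequality only for destabilizers with $A=a_E(X)<n$, and you acknowledge that the case $A\ge n$ is left open. That is a genuine gap, not a finishing touch: as your own computation shows, the two order bounds, combined with any manipulation of the volume that uses only them plus the trivial bound on $[0,A-1]$, are consistent with $A\le S_X(E)$ once $A\ge n$, so no rearrangement of the same ingredients can close the argument; an additional idea is genuinely required, and it is the heart of the theorem.

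The remedies you sketch do not supply that idea. \cref{p:colength-mult-ideal}, \cref{l:sharp-bound-length} and the doubling trick of \cref{l:double-log-discr} are colength estimates for multiplier ideals on smooth (or explicitly controlled) ambient varieties; in this paper they are used only in the hypersurface rigidity proofs and have no evident bearing on an abstract Fano $X$ as in \cref{t:SZ}, where no projective embedding, Nadel-type vanishing with controlled twist, or bound on $h^0$ is available. Likewise, reducing to Koll\'ar components does not bound $a_E(X)$ from above, nor does it by itself improve the volume estimate in the range $0\le t\le A-1$ where your argument is lossy. Note also that the paper does not prove this statement: it is quoted from Stibitz--Zhuang \cite{SZ19} (with a stronger variant in \cite{Zhu20}), and their proof is not obtained by integrating these two linear bounds --- it cannot be, by the observation above --- so completing your proposal would amount to reproducing their argument rather than refining your estimate. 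As it stands, the proposal is a correct partial computation plus a program whose key step is missing.
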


As it is already observed in \cite{SZ19}, in view of \cref{t:smooth} and
well-established bounds on the alpha invariant of smooth Fano hypersurfaces, 
this result implies immediately that smooth hypersurfaces of 
degree $n+1$ in $\P^{n+1}$ are $K$-stable hence admit a K\"ahler--Einstein metric. 
In order to apply the result to hypersurfaces with isolated ordinary double points, 
we need the following property. 

\begin{proposition}
\label{p:alpha}
For any $n \ge 4$, if $X \subset \P^{n+1}$ is a hypersurface of degree $n+1$
with only isolated ordinary double points as singularities, then 
$\lct(X,D) > \frac 12$ for every effective $\Q$-divisor $D \sim_\Q -K_X$.
\end{proposition}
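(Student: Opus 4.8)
The plan is to argue by contradiction: suppose there is an effective $\Q$-divisor $D\sim_\Q -K_X$ with $\lct(X,D)\le\frac12$, and derive a contradiction with the numerical constraint that $D$ has degree $n+1$. Since $-K_X\sim\O_X(1)$, the divisor $D$ is cut out (after clearing denominators) by a form of degree $n+1$. First I would reduce to a local statement at a single point $p\in X$ where $\lct_p(X,D)\le\frac12$, i.e.\ where $(X,2D)$ fails to be log canonical. The point $p$ either lies in $X_\reg$ or is one of the finitely many ordinary double points. In the smooth case this is exactly the situation handled by the colength/Nadel-vanishing machinery: by Lemma \ref{l:double-log-discr} the multiplier ideal $\J(X,2D)$ defines a subscheme $\S$ which is nontrivial at $p$ (not log canonical $\Rightarrow$ not log terminal, so $\J(X,2D)$ is a proper ideal there), and since $2D$ has degree $2(n+1)$ and $-K_X\sim\O_X(1)$ is ample with $K_X+2D\sim (n+1-1)(-K_X)$... — but this only bounds sections of a line bundle, and one needs $\S$ to be supported at a single point to run the dimension count. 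The cleaner route, and the one I expect the paper to take, is the cutting-down argument exactly as in the proof of \cref{t:smooth}: intersect with general hyperplane sections through $p$ to reduce the dimension while preserving the failure of log canonicity at $p$ via inversion of adjunction (\cref{t:inv-of-adj}), and control multiplicities along the way with \cref{p:Puk} (in the singular-hypersurface form of \cref{r:Puk}) and \cref{p:Bertini}.

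Here is the sequence of steps I would carry out. (1) Fix $p\in X$ with $\lct_p(X,D)\le\frac12$. If $p\in X_\reg$, cut $X$ down by general hyperplane sections through $p$: by inversion of adjunction the restricted pair remains non-log-canonical at $p$, and by \cref{p:Bertini} the multiplicities $e_y(D|_Y)$ stay controlled, so after cutting down to a surface $S$ one gets $\lct_p(S,D|_S)<\frac12$ for a curve-germ $D|_S$ of bounded degree, and \cref{t:mult-thresholds}(1) combined with a Bezout estimate (\cref{p:Bezout1}) on $S$ — which has degree $n+1$ — gives $e_p(D|_S)\le n+1$ against $e_p(D|_S)\ge 1/\lct_p > 2$; for $n\ge4$ one instead runs the Nadel-vanishing/colength comparison exactly as in the $n\ge5$ part of the proof of \cref{t:smooth}, noting $D$ has a single component so one compares $l(\O_\S)\ge\frac12 3^{\dim}$ against $h^0(\O_Y(\text{small}))$. (2) If $p$ is an ordinary double point, use the Mather-log-discrepancy / projection argument: cut down by hyperplane sections through $p$ until the local model is an ordinary double point in small dimension, then apply the flat degeneration to a union of two hyperplanes (as in the $n\ge8$ part of the proof of \cref{t:ODP}) together with \cref{l:sharp-bound-length}, or alternatively the generic-projection argument via \cref{t:projection} as in the $5\le n\le7$ part. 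The key local input at an ordinary double point is $\Jac_X\cdot\O_{X,p}=\fm_p$, so $\^a_E(X,cD)\le a_E(X,cD)+\val_E(\fm_p)$, exactly as used for \cref{t:ODP}.

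The main obstacle, and the place where some genuine (if short) work is needed, is the singular-point case: at an ordinary double point one cannot directly invoke \cref{l:sharp-bound-length} (which needs a regular local ring) nor the smooth form of \cref{p:Puk}. The resolution — and this is the content I expect — is precisely the degeneration-to-two-hyperplanes trick from the proof of \cref{t:ODP}: the flat degeneration of the quadric cone $\sum x_i^2=0$ to $H_1\cup H_2$, followed by two applications of inversion of adjunction to land on a smooth $\A^{\dim-2}$, after which \cref{l:sharp-bound-length} applies and gives $l(\O_\S)\ge\frac12 3^{\dim-2}$. Since $D$ is a single divisor (not the self-intersection $Z=D\cap D'$ of two members of a movable system), the relevant upper bound on $h^0(\O_\S)$ comes from Nadel vanishing applied to $\O_Y(k)\otimes\J(Y,2D|_Y)$ for the appropriate twist $k$ making $\O_Y(k)-K_Y-2D|_Y$ nef and big; here the numerics are far more favorable than in \cref{t:ODP} because $D\sim_\Q -K_X$ rather than $\cH\subset|-rK_X|$ with $r$ possibly large, so the comparison $\frac12 3^{\dim-2}>h^0$ fails only in very low dimension — this is why the hypothesis $n\ge4$ appears, matching the range in \cref{t:SZ}. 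I would organize the write-up to mirror the two-case structure (smooth point / ordinary double point) of the proofs of \cref{t:smooth} and \cref{t:ODP}, invoking those arguments almost verbatim and only highlighting the simplifications coming from $\deg D = n+1$.
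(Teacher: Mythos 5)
There is a genuine gap at the heart of your main route. The colength/Nadel comparison in the proofs of \cref{t:smooth} and \cref{t:ODP} is applied to $\J(Y,2cZ|_Y)$ where $Z=D\cap D'$ is the codimension-two intersection of two \emph{general} members of a \emph{movable} linear system; movability is what makes the doubled pair log terminal away from a small set, via the bound $\lct_q(S,Z|_S)\ge 2/\sqrt{e_q(Z|_S)}\ge 2/r>2c$ from \cref{t:mult-thresholds}(2) --- the factor $2$ in the numerator is exactly what absorbs the doubling. For a single rigid $\Q$-divisor $D\sim_\Q -K_X$ with $\lct(X,D)\le\frac12$, the doubled pair is $(X,2\cdot\frac12 D)=(X,D)$, and the only available bound is $\lct_x\ge 1/e_x(D)$, which gives log canonicity, not log terminality, along $\Supp D$. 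Concretely, if $D$ is an irreducible divisor in $|-K_X|$ (coefficient $1$), then $(X,D)$ fails to be klt along all of $D$, the cosupport $\S$ of $\J(Y,D|_Y)$ is divisorial, $l(\O_\S)=\infty$ while $h^0(\O_\S)$ is finite, and the comparison of \cref{p:colength-mult-ideal} with Nadel vanishing yields no contradiction. Your surface fallback is also not a contradiction: $e_p(D|_S)\le n+1$ is perfectly compatible with $e_p(D|_S)>2$.

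The paper's actual proof is built entirely around the device you mention only as an ``alternative'' for the nodal case: a general linear projection $f\colon X\to\P^n$, with $\D=\frac12 f_*D$. Pushing forward while keeping the coefficient $\frac12$ makes all coefficients of $\D$ at most $\frac12$, and a formal-neighborhood analysis of the generically two-to-one local structure at the node (where the pullback of $\D$ is the average $\frac12(D+\t(D))$ of $D$ with its involute) shows that $(\P^n,\D)$ is log terminal in dimension $2$ near $q=f(p)$, while \cref{t:projection} together with $\Jac_X\cdot\O_{X,p}=\fm_p$ shows that $(\P^{n-1},\D|_{\P^{n-1}})$ is not klt at $q$ for a general hyperplane cut. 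Since $\deg\D=(n+1)/2$, Nadel vanishing applies with a \emph{negative} twist on $\P^{n-1}$, and the isolated point of the non-klt locus at $q$ gives the contradiction; this is also where the hypothesis $n\ge4$ enters (via $-n+\ru{\tfrac n2}+1\le-1$), not through a colength count. Your proposal correctly anticipates several ingredients ($\Jac_X\cdot\O_{X,p}=\fm_p$, inversion of adjunction, \cref{t:projection}), but without the projection step --- or some substitute that forces the non-klt locus of the relevant pair to be zero-dimensional near the point in question --- the argument does not close.
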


\begin{proof}
Suppose by contradiction that there exists an effective 
$\Q$-divisor $D \sim_\Q -K_X$ such that $\lct(X, D) \le \frac 12$.
Let $E$ be a divisor over $X$ such that $a_E(X,\frac 1{2} D) \le 0$, 
and let $T \subset X$ be the center of $E$ in $X$. 
\cref{t:mult-thresholds,p:Puk} imply that $T$ is at most one-dimensional, 
and that it is zero-dimensional if it is contained in the smooth locus of $X$. 
Let $p \in T$ be a general point.

Let $f \colon X \to \P^n$ be induced by a general linear projection, let $q = f(p)$, and let
\[
\D = \frac 1{2}\. f_*D.
\]
We fix a general hyperplane $\P^{n-1} \subset \P^n$ through $q$. 

We claim that
\begin{equation}
\label{eq:mld_q}
\mld_q(\P^{n-1},\D|_{\P^{n-1}}) \le 0.
\end{equation}
If $\dim T = 1$ and $\x$ is the generic point of $f(T)$, then we have $\mld_\x(\P^n,\D) \le 0$ 
by \cref{t:projection}, and since $q$ is a general point of $f(T)$, 
\eqref{eq:mld_q} follows by Bertini's theorem applied on a log resolution. 
If $T=\{p\}$ is zero-dimensional, then we have $\^a_E(X,\frac 12 D) \le \val_E(\fm_p)$.
This is clear if $p \in X_\reg$, and holds if $p$ is a singular point since
in this case we have $\Jac_X\.\O_{X,p} = \fm_p$. 
Then \cref{t:projection} implies that there is a divisor $F$ over $\P^n$
with center $q$ such that $a_F(\P^n,\D) \le \val_F(\fm_q)$. 
This implies that $\mld_q(\P^n,\D+ \P^{n-1}) \le 0$, hence \eqref{eq:mld_q}
follows by inversion of adjunction. 

Our next claim is that the pair $(\P^{n-1},\D|_{\P^{n-1}})$ is log terminal in dimension 1 near $q$. 
We will prove this by showing that the pair $(\P^n,\D)$ is log terminal in dimension 2 near $q$. 
To see this, we restrict to formal neighborhoods. 
Let $U = \Spec(\^{O_{X,p}})$ and $V = \Spec(\^{\O_{\P^n,q}})$, 
and denote by $g = f|_U \colon U \to V$ the induced map. 
Resolution of singularities holds in this setting (and in fact for the 
purpose of this proof it suffice to rely solely on resolutions obtained by base-change from 
$X$ and $\P^n$, respectively), so we can extend the definition of singularities to this setting. 
We are going to show that $(V,\D|_V)$ is log terminal in dimension 2. 
By taking a general projection we can ensure that $p$ is the 
only point in the support of $D$ mapping to $q$, hence 
$\D|_V = \frac 1{2} \.g_*(D|_U)$.
If $p$ is a smooth point of $X$, then $g$ is an isomorphism
and the claim is clear. 
If $p$ is a singular point, then $g$ is a generically two-to-one cover. 
In this case, if $G$ is any divisor over $V$ with positive dimensional center $W \subset V$,
and $G'$ is any divisor over $U$ such that $\val_{G'}$ restricts to 
$b\val_G$ on $V$ for some $b \in \N$ (such $G'$ always exists), then 
a standard computation gives
\begin{equation}
\label{a_G(V,D)}
b \, a_G(V,\D|_V) = a_{G'}(U,g^*(\D|_V) - K_{U/V}) \ge a_{G'}(U,g^*(\D|_V)). 
\end{equation}
Here we are using the fact that $W$ is positive dimensional in order to reduce to a computation 
on smooth varieties (i.e., away from $p$ and $q$), and use that $K_{U/V} \ge 0$ in the last inequality. 
If $\t \colon U \to U$ is the involution defined by the cover $U \to V$, 
then $g^*(\D|_V) = \tfrac{1}{2}(D|_U + \t(D|_U))$, hence
\[
\val_{G'}(g^*(\D|_V)) \le \max \{\val_{G'}(D|_U), \val_{G'}(\t(D|_U))\}.
\]
Since both $(U,D|_U)$ and $(U,\t(D|_U))$ are log terminal 
in dimension 2, it follows that $a_{G'}(U,g^*(\D|_V)) > 0$, and hence $a_G(V,\D|_V) > 0$ by \eqref{a_G(V,D)}, 
whenever $\dim(W) \ge 2$. This proves that $(V,\D|_V)$ is log terminal in dimension 2, which in turns implies
that $(\P^n,\D)$ is log terminal in dimension 2 near $q$.
Since $\P^{n-1}$ is a general hyperplane through $q$, 
we conclude that $(\P^{n-1},\D|_{\P^{n-1}})$ is log terminal in dimension 1 near $q$. 

We are now ready to finish the proof. 
Since $\deg(\D)  < \ru{\frac n2} + 1$, Nadel's vanishing gives
\[
H^1(\P^{n-1},\O(-n + \ru{\tfrac n2} + 1) \otimes \J(\P^{n-1},\D|_{\P^{n-1}})) = 0.
\]
Note that $-n + \ru{\tfrac n2} + 1 \le -1$ for $n \ge 4$. 
As the multiplier ideal $\J(\P^{n-1},\D|_{\P^{n-1}})$ defines a scheme with a zero-dimensional component
supported at $q$, this gives a contradiction. 
\end{proof}

\begin{proof}[Proof of \cref{t:ODP-K-stab}]
It follows by \cref{p:alpha,t:SZ} that $X$ is K-stable, and \cite[Theorem~1.5]{LXZ21}
implies that $X$ admits a weak K\"ahler--Einstein metric. 
\end{proof}

\begin{remark}
\cref{p:alpha} also holds for $n=4$, a case which is not covered by \cref{t:ODP}.
If one can extend \cref{t:ODP} to include the four-dimensional case, then
K-stability would follow as well. 
\end{remark}

\begin{bibdiv}
\begin{biblist}

\bib{Ben70}{article}{
   author={Bennett, B. M.},
   title={On the characteristic functions of a local ring},
   journal={Ann. of Math.},
   volume={91},
   date={1970},
   pages={25--87},
}

\bib{BX19}{article}{
   author={Blum, Harold},
   author={Xu, Chenyang},
   title={Uniqueness of K-polystable degenerations of Fano varieties},
   journal={Ann. of Math. (2)},
   volume={190},
   date={2019},
   number={2},
   pages={609--656},
}

\bib{Che01}{article}{
   author={Cheltsov, Ivan},
   title={Log canonical thresholds on hypersurfaces},
   language={Russian, with Russian summary},
   journal={Mat. Sb.},
   volume={192},
   date={2001},
   number={8},
   pages={155--172},
   translation={
      journal={Sb. Math.},
      volume={192},
      date={2001},
      number={7-8},
      pages={1241--1257},
   },
}

\bib{Che07}{article}{
   author={Cheltsov, Ivan},
   title={On nodal sextic fivefold},
   journal={Math. Nachr.},
   volume={280},
   date={2007},
   number={12},
   pages={1344--1353},
}



\bib{Cor00}{article}{
   author={Corti, Alessio},
   title={Singularities of linear systems and $3$-fold birational geometry},
   conference={
      title={Explicit birational geometry of 3-folds},
   },
   book={
      series={London Math. Soc. Lecture Note Ser.},
      volume={281},
      publisher={Cambridge Univ. Press, Cambridge},
   },
   date={2000},
   pages={259--312},
}

\bib{CM04}{article}{
   author={Corti, Alessio},
   author={Mella, Massimiliano},
   title={Birational geometry of terminal quartic 3-folds. I},
   journal={Amer. J. Math.},
   volume={126},
   date={2004},
   number={4},
   pages={739--761},
}

\bib{dF13}{article}{
   author={de Fernex, Tommaso},
   title={Birationally rigid hypersurfaces},
   journal={Invent. Math.},
   volume={192},
   date={2013},
   number={3},
   pages={533--566},
}

\bib{dF16}{article}{
   author={de Fernex, Tommaso},
   title={Erratum to: Birationally rigid hypersurfaces},
   journal={Invent. Math.},
   volume={203},
   date={2016},
   number={2},
   pages={675--680},
}

\bib{dF17}{article}{
   author={de Fernex, Tommaso},
   title={Birational rigidity of singular Fano hypersurfaces},
   journal={Ann. Sc. Norm. Super. Pisa Cl. Sci. (5)},
   volume={17},
   date={2017},
   number={3},
   pages={911--929},
}

\bib{dFD14}{article}{
   author={de Fernex, Tommaso},
   author={Docampo, Roi},
   title={Jacobian discrepancies and rational singularities},
   journal={J. Eur. Math. Soc. (JEMS)},
   volume={16},
   date={2014},
   number={1},
   pages={165--199},
}

		
\bib{dFEM03}{article}{
   author={de Fernex, Tommaso},
   author={Ein, Lawrence},
   author={Musta\c{t}\u{a}, Mircea},
   title={Bounds for log canonical thresholds with applications to
   birational rigidity},
   journal={Math. Res. Lett.},
   volume={10},
   date={2003},
   number={2-3},
   pages={219--236},
}

\bib{dFEM04}{article}{
   author={de Fernex, Tommaso},
   author={Ein, Lawrence},
   author={Musta\c{t}\u{a}, Mircea},
   title={Multiplicities and log canonical threshold},
   journal={J. Algebraic Geom.},
   volume={13},
   date={2004},
   number={3},
   pages={603--615},
}	

\bib{dFM15}{article}{
   author={de Fernex, Tommaso},
   author={Musta{\c{t}}{\u{a}}, Mircea},
   title={The volume of a set of arcs on a variety},
   journal={Rev. Roumaine Math. Pures Appl.},
   volume={60},
   date={2015},
   pages={375--401},
   note={Special issue in honor of Lucian Badescu's 70th birthday},
}

\bib{EP14}{article}{
   author={Eckl, Thomas},
   author={Pukhlikov, Aleksandr},
   title={On the locus of nonrigid hypersurfaces},
   conference={
      title={Automorphisms in birational and affine geometry},
   },
   book={
      series={Springer Proc. Math. Stat.},
      volume={79},
      publisher={Springer, Cham},
   },
   date={2014},
}

\bib{Fan07}{article}{
   author={Fano, Gino},
   title={Sopra alcune variet\`a algebriche a tre dimensioni aventi tutti i generi nulli},
   journal={Atti Accad. Torino},
   volume={43},
   date={1907/08},
   pages={973--377},
}

\bib{Fan15}{article}{
   author={Fano, Gino},
   title={Osservazioni sopra alcune variet\`a non razionali aventi tutti i generi nulli},
   journal={Atti Accad. Torino},
   volume={47},
   date={1915},
   pages={1067--1071},
}

\bib{Fuj19}{article}{
   author={Fujita, Kento},
   title={A valuative criterion for uniform K-stability of $\Bbb Q$-Fano
   varieties},
   journal={J. Reine Angew. Math.},
   volume={751},
   date={2019},
   pages={309--338},
}

\bib{Fuj19b}{article}{
   author={Fujita, Kento},
   title={K-stability of Fano manifolds with not small alpha invariants},
   journal={J. Inst. Math. Jussieu},
   volume={18},
   date={2019},
   number={3},
   pages={519--530},
}

\bib{Ful98}{book}{
   author={Fulton, William},
   title={Intersection theory},
   series={Ergebnisse der Mathematik und ihrer Grenzgebiete. 3. Folge. A
   Series of Modern Surveys in Mathematics [Results in Mathematics and
   Related Areas. 3rd Series. A Series of Modern Surveys in Mathematics]},
   volume={2},
   edition={2},
   publisher={Springer-Verlag, Berlin},
   date={1998},
} 


\bib{IM71}{article}{
   author={Iskovskih, V. A.},
   author={Manin, Ju. I.},
   title={Three-dimensional quartics and counterexamples to the L\"{u}roth
   problem},
   language={Russian},
   journal={Mat. Sb. (N.S.)},
   volume={86(128)},
   date={1971},
   pages={140--166},
}


\bib{Kol92}{article}{
   author={Koll\'ar, J\'anos},
   title={Adjunction and discrepancies},
   conference={
   title={Flips and abundance for algebraic threefolds},
   },
   note={Papers from the Second Summer Seminar on Algebraic Geometry held at
   the University of Utah, Salt Lake City, Utah, August 1991;
   Ast\'{e}risque No. 211 (1992) (1992)},
   publisher={Soci\'{e}t\'{e} Math\'{e}matique de France, Paris},
   date={1992},
   pages={1--258},
}

\bib{Kol19}{article}{
   author={Koll\'ar, J\'anos},
   title={The rigidity theorem of Fano--Segre--Iskovskikh--Manin--Pukhlikov--Corti--Cheltsov--de Fernex--Ein--Musta\c t\u a--Zhuang},
   conference={
      title={Birational Geometry of Hypersurfaces, Gargnano del Garda, Italy, 2018},
   },
   book={
      series={Lecture Notes of the Unione Matematica Italiana},
      publisher={Springer International Publishing},
   },
   date={2019},
}

\bib{Laz04}{book}{
   author={Lazarsfeld, Robert},
   title={Positivity in algebraic geometry. II},
   series={Ergebnisse der Mathematik und ihrer Grenzgebiete. 3. Folge. A
   Series of Modern Surveys in Mathematics [Results in Mathematics and
   Related Areas. 3rd Series. A Series of Modern Surveys in Mathematics]},
   volume={49},
   note={Positivity for vector bundles, and multiplier ideals},
   publisher={Springer-Verlag, Berlin},
   date={2004},
}

\bib{Li17}{article}{
   author={Li, Chi},
   title={K-semistability is equivariant volume minimization},
   journal={Duke Math. J.},
   volume={166},
   date={2017},
   number={16},
   pages={3147--3218},
}

\bib{LZ19}{article}{
   author={Liu, Yuchen},
   author={Zhuang, Ziquan},
   title={On the sharpness of Tian's criterion for K-stability},
   note={Preprint, {\tt arXiv:1903.04719}},
   date={2019},
}

\bib{LZ21}{article}{
   author={Liu, Yuchen},
   author={Zhuang, Ziquan},
   title={Birational superrigidity and $K$-stability of singular Fano
   complete intersections},
   journal={Int. Math. Res. Not. IMRN},
   date={2021},
   number={1},
   pages={384--403},
}

\bib{LXZ21}{article}{
   author={Liu, Yuchen},
   author={Xu, Chenyang},
   author={Zhuang, Ziquan},
   title={Finite generation for valuations computing stability thresholds and applications to K-stability},
   note={Preprint, {\tt arXiv:2102.09405}},
   date={2021},
}

\bib{Man66}{article}{
   author={Manin, Ju. I.},
   title={Rational surfaces over perfect fields},
   language={Russian, with English summary},
   journal={Inst. Hautes \'{E}tudes Sci. Publ. Math.},
   number={30},
   date={1966},
   pages={55--113},
}

\bib{Mel04}{article}{
   author={Mella, Massimiliano},
   title={Birational geometry of quartic 3-folds. II. The importance of
   being $\mathbb Q$-factorial},
   journal={Math. Ann.},
   volume={330},
   date={2004},
   number={1},
   pages={107--126},
}

\bib{Nob75}{article}{
   author={Nobile, A.},
   title={Some properties of the Nash blowing-up},
   journal={Pacific J. Math.},
   volume={60},
   date={1975},
   number={1},
   pages={297--305},
   issn={0030-8730},
}

\bib{OO13}{article}{
   author={Odaka, Yuji},
   author={Okada, Takuzo},
   title={Birational superrigidity and slope stability of Fano manifolds},
   journal={Math. Z.},
   volume={275},
   date={2013},
   number={3-4},
   pages={1109--1119},
}

\bib{OS12}{article}{
   author={Odaka, Yuji},
   author={Sano, Yuji},
   title={Alpha invariant and K-stability of $\Bbb Q$-Fano varieties},
   journal={Adv. Math.},
   volume={229},
   date={2012},
   number={5},
   pages={2818--2834},
}

\bib{Pie79}{article}{
   author={Piene, Ragni},
   title={Ideals associated to a desingularization},
   conference={
      title={Algebraic geometry (Proc. Summer Meeting, Univ. Copenhagen,
      Copenhagen, 1978)},
   },
   book={
      series={Lecture Notes in Math.},
      volume={732},
      publisher={Springer, Berlin},
   },
   date={1979},
   pages={503--517},
}

\bib{Puk87}{article}{
   author={Pukhlikov, A. V.},
   title={Birational isomorphisms of four-dimensional quintics},
   journal={Invent. Math.},
   volume={87},
   date={1987},
   number={2},
   pages={303--329},
}

\bib{Puk88}{article}{
   author={Pukhlikov, A. V.},
   title={Birational automorphisms of a three-dimensional quartic with a
   simple singularity},
   language={Russian},
   journal={Mat. Sb. (N.S.)},
   volume={135(177)},
   date={1988},
   number={4},
   pages={472--496, 559},
   translation={
      journal={Math. USSR-Sb.},
      volume={63},
      date={1989},
      number={2},
      pages={457--482},
   },
}

\bib{Puk98}{article}{
   author={Pukhlikov, A. V.},
   title={Birational automorphisms of Fano hypersurfaces},
   journal={Invent. Math.},
   volume={134},
   date={1998},
   number={2},
   pages={401--426},
}

\bib{Puk02}{article}{
   author={Pukhlikov, A. V.},
   title={Birationally rigid Fano hypersurfaces},
   language={Russian, with Russian summary},
   journal={Izv. Ross. Akad. Nauk Ser. Mat.},
   volume={66},
   date={2002},
   number={6},
   pages={159--186},
   translation={
      journal={Izv. Math.},
      volume={66},
      date={2002},
      number={6},
      pages={1243--1269},
   },
}

\bib{Puk02b}{article}{
   author={Pukhlikov, A. V.},
   title={Birationally rigid Fano hypersurfaces with isolated singularities},
   language={Russian, with Russian summary},
   journal={Mat. Sb.},
   volume={193},
   date={2002},
   number={3},
   pages={135--160},
   translation={
      journal={Sb. Math.},
      volume={193},
      date={2002},
      number={3-4},
      pages={445--471},
   },
}


\bib{Puk05}{article}{
   author={Pukhlikov, A. V.},
   title={Birational geometry of Fano direct products},
   language={Russian, with Russian summary},
   journal={Izv. Ross. Akad. Nauk Ser. Mat.},
   volume={69},
   date={2005},
   number={6},
   pages={153--186},
   translation={
      journal={Izv. Math.},
      volume={69},
      date={2005},
      number={6},
      pages={1225--1255},
   },
}

\bib{Puk21}{article}{
   author={Pukhlikov, A. V.},
   title={Birational geometry of varieties, fibred into complete intersections of codimension two},
   note={Preprint, {\tt arXiv:2101.10830}},
   date={2021},
}

\bib{Puk15}{article}{
   author={Pukhlikov, A. V.},
   title={Birationally rigid Fano fibrations. II},
   language={Russian, with Russian summary},
   journal={Izv. Ross. Akad. Nauk Ser. Mat.},
   volume={79},
   date={2015},
   number={4},
   pages={175--204},
   translation={
      journal={Izv. Math.},
      volume={79},
      date={2015},
      number={4},
      pages={809--837},
   },
}

\bib{Seg51}{article}{
    AUTHOR = {Segre, Beniamino},
     TITLE = {On the rational solutions of homogeneous cubic equations in
              four variables},
   JOURNAL = {Math. Notae},
    VOLUME = {11},
      YEAR = {1951},
     PAGES = {1--68},
}

\bib{Sho92}{article}{
   author={Shokurov, V. V.},
   title={Three-dimensional log perestroikas},
   language={Russian},
   journal={Izv. Ross. Akad. Nauk Ser. Mat.},
   volume={56},
   date={1992},
   number={1},
   pages={105--203},
   translation={
      journal={Russian Acad. Sci. Izv. Math.},
      volume={40},
      date={1993},
      number={1},
      pages={95--202},
   },
}

\bib{SZ19}{article}{
   author={Stibitz, Charlie},
   author={Zhuang, Ziquan},
   title={K-stability of birationally superrigid Fano varieties},
   journal={Compos. Math.},
   volume={155},
   date={2019},
   number={9},
   pages={1845--1852},
}

\bib{Tia87}{article}{
   author={Tian, Gang},
   title={On K\"{a}hler-Einstein metrics on certain K\"{a}hler manifolds with
   $C_1(M)>0$},
   journal={Invent. Math.},
   volume={89},
   date={1987},
   number={2},
   pages={225--246},
}

\bib{Xu20}{article}{
   author={Xu, Chenyang},
   title={K-stability of Fano varieties: an algebro-geometric approach},
   note={Preprint, {\tt arXiv:2011.10477}},
   date={2020},
}

		
\bib{Zhu20}{article}{
   author={Zhuang, Ziquan},
   title={Birational superrigidity and $K$-stability of Fano complete
   intersections of index 1},
   note={With an appendix by Zhuang and Charlie Stibitz},
   journal={Duke Math. J.},
   volume={169},
   date={2020},
   number={12},
   pages={2205--2229},
}

\end{biblist}
\end{bibdiv}

\end{document}